\newtheorem{theorem}{Theorem}
\newtheorem{lemma}[theorem]{Lemma}
\newtheorem{proposition}[theorem]{Proposition}
\newtheorem{remark}[theorem]{Remark}
\newtheorem{definition}[theorem]{Definition}
\newtheorem{conjecture}[theorem]{Conjecture}
\newtheorem{theoremletter}{Theorem}
\newtheorem{propositionletter}{Proposition}
\newcommand{\innerthmname}{}
\theoremstyle{definition}
\def\namedlabel#1#2{\begingroup
	#2%
	\def\@currentlabel{#2}%
	\phantomsection\label{#1}\endgroup
}
\def\XXint#1#2#3{{\setbox0=\hbox{$#1{#2#3}{\int}$ }
		\vcenter{\hbox{$#2#3$ }}\kern-.6\wd0}}
\newcommand*\owedge{\mathpalette\@owedge\relax}
\newcommand*\@owedge[1]{%
	\mathbin{%
		\ooalign{%
			$#1\m@th\bigcirc$\cr
			\hidewidth$#1\m@th\wedge$\hidewidth\cr
		}%
	}%
}
\newcommand{\dive}{\mathrm{div}}
\newcommand{\ud}{\mathrm{d}}
\newcommand{\Ss}{\mathbb{S}}
\DeclareMathOperator{\Ric}{Ric}
\title[Compactness of solutions to the GJMS equation]{Compactness of singular solutions to the sixth order GJMS equation} 
\thanks{This work was partially supported by S\~ao Paulo Research Foundation (FAPESP) \#2020/07566-3 and \#2021/15139-0, Para\'iba State Research Foundation (FAPESQ) \#3034/2021, and National Council for Scientific and Technological Development (CNPq) \#312340/2021-4 and \#429285/2016-7, and Natural Sciences and Engineering Research Council of Canada (NSERC)}
\author[J.H. Andrade]{Jo\~{a}o Henrique Andrade}
\author[J.M. do \'O]{Jo\~{a}o Marcos do \'O}
\author[J. Ratzkin]{Jesse Ratzkin}
\author[J. Wei]{Juncheng Wei}
\address[J.H. Andrade]{
  Department of Mathematics,
	University of British Columbia
	\newline\indent 
	V6T 1Z2, Vancouver-BC, Canada
	\newline\indent
	and
	\newline\indent 
	Institute of Mathematics and Statistics,
	University of S\~ao Paulo
	\newline\indent 
	05508-090, S\~ao Paulo-SP, Brazil
	}
\email{\href{mailto:andradejh@math.ubc.ca}{andradejh@math.ubc.ca}}
\email{\href{mailto:andradejh@ime.usp.br}{andradejh@ime.usp.br}}
\address[J.M. do \'O]{Department of Mathematics,
	Federal University of Para\'{\i}ba
	\newline\indent 
	58051-900, Jo\~ao Pessoa-PB, Brazil}
\email{\href{mailto:jmbo@pq.cnpq.br}{jmbo@pq.cnpq.br}}
\address[J. Ratzkin]{Department of Mathematics,
	Universit\"{a}t W\"{u}rzburg
	\newline\indent
	97070, W\"{u}rzburg-BA, Germany}
\email{\href{mailto:jesse.ratzkin@mathematik.uni-wuerzburg.de}{jesse.ratzkin@mathematik.uni-wuerzburg.de}}
\address[J. Wei]{
	Department of Mathematics,
	University of British Columbia
	\newline\indent 
	V6T 1Z2, Vancouver-BC, Canada}
\email{\href{mailto:jcwei@math.ubc.ca}{jcwei@math.ubc.ca}}
\subjclass[2020]{35J60, 35B09, 35J30, 35B40}
\keywords{Tri-Laplacian, Critical exponent, Sixth order equations, Local asymptotic behavior, Emden--Fowler solutions}
\begin{document}
	
	\begin{abstract}
		We study compactness properties of the set of conformally flat singular metrics with
		constant, positive sixth order $Q$-curvature on a finitely punctured sphere.
		Based on a recent classification of the local asymptotic behavior near isolated singularities, we 
		introduce a notion of necksize for these metrics in our moduli space, which we use to characterize compactness.
		More precisely, we prove that if the punctures remain separated and the necksize at each puncture is 
		bounded away from zero along a sequence of metrics, then 
		a subsequence converges with respect to the Gromov--Hausdorff metric.
		Our proof relies on an upper bound estimate which is proved using moving planes and a blow-up argument. 
		This is combined with a lower bound estimate which is a consequence of a removable singularity theorem.
		We also introduce a homological invariant which may be of independent interest for upcoming research.
	\end{abstract}
	
	\maketitle


    \numberwithin{equation}{section} 
	\numberwithin{theorem}{section}
	
	\section{Introduction}
	In recent years, there has been active research into analogs of the Yamabe problem and 
	its singular counterpart. 
    In each of these problems, one seeks a representative 
	of a conformal class with constant curvature of some type, scalar curvature 
	in the classical case, and some $\sigma_k$-curvature or one of Branson's $Q^{2m}$-curvatures 
	in more modern examples. Conformal invariance (or, more generally, covariance) often 
	complicates these problems, leading to singular 
	solutions and the lack of compactness in the space of solutions. For this 
	reason, it is always appealing to characterize which geometric properties 
	in the solution space imply compactness. 
	
	In the present paper, we study the moduli space of complete, conformally 
	flat metrics with constant sixth order $Q^6$-curvature on a finitely punctured 
	sphere. Our main result generalizes a theorem of Pollack \cite{MR1266101} in the scalar
	curvature setting, stating that so long as the punctures remain 
	separated and certain geometric necksizes bounded away from zero, the 
	the corresponding subset of moduli space is compact in the Gromov-Hausdorff 
	topology. 
	
	Let $n \geqslant 7$ and denote the $n$-dimensional sphere by $\mathbb{S}^n$. For 
	$N \in \mathbb{N}$ we let $\Lambda = \{ p_1, \dots, p_N\} \subset \mathbb{S}^n$ 
	be a finite subset and seek complete metrics on $\Omega := \mathbb{S}^n \backslash 
	\Lambda$ of the form $g = U^{{4}/{n-6}} g_0$, where $g_0$ is the standard
	round metric. The fact that $g$ is complete on $\Omega$ forces 
	$\liminf_{p \rightarrow p_i} U(p) = \infty$ for each $i = 1, \dots, N$. Furthermore,
	we prescribe the resulting metric to have constant $Q^6$-curvature, which we normalize to 
	be 
	\begin{equation}\label{const_q_curv}
	Q_n = Q^6(g_0) = \frac{n(n^4-20n^2+64)}{2^5}.\end{equation}
	We define $Q^6(g)$ the quantity in Definition~\ref{def:curvatures} for any smooth metric. 
	
	The $Q$-curvature $Q^6$ behaves well under a conformal change of metric. More 
	precisely, the condition that $g = U^{{4}/{(n-6)}} g_0$ satisfies $Q^6(g) = Q_n$ on 
	$\Omega = \Ss^n \backslash \Lambda$ is equivalent to the PDE 
		\begin{equation}\tag{$\mathcal{Q}_{6,g_0,N}$}\label{ourequation}
        	P^6_{g_0}U=c_n U^{\frac{n+6}{n-6}} \quad \mbox{on} \quad \Omega,
    	\end{equation}
    where $c_n=\frac{n-6}{2}{Q}_n$ is a normalizing constant.
    The operator on the left-hand side is the sixth order GJMS operator on the sphere defined by 
    \begin{equation}\label{sphericalGJMSoperator}
		    P_{g_0}^6=\left(-\Delta_{g_0}+\frac{(n-6)(n+4)}{4}\right)
		    \left(-\Delta_{g_0}+\frac{(n-4)(n+2)}{4}\right)
		    \left(-\Delta_{g_0}+\frac{n(n-2)}{4}\right) ,
		\end{equation}
    and after a conformal change of metric $g = U^{{4}/{n-6}} g_0$, it transforms as 
	\begin{equation}\label{transformationlawpaneitz}
        P^6_{g}\phi=U^{-\frac{n+6}{n-6}}P^6_{g_0}(U\phi) 
    \quad \mbox{for all} \quad \phi\in \mathcal{C}^{\infty}(\Omega).
	\end{equation}
	For more details on this subject, we refer the interested reader to \cite{MR3077914,MR3652455,MR4285731,MR3073887}.
	
	In \cite{GJMS} Graham, Jenne, Mason and Sparling constructed conformally 
	covariant differential operators $P_{g}^{2m}$ on a compact $n$-dimensional Riemannian manifold 
	$(M^n,g)$ for any  
	$m \in \mathbb{N}$ such the leading order term of $P_{g}^{2m}$ is 
	$(-\Delta_{g})^m$. One can then construct the associated $Q$-curvature 
	of order $2m$ by $Q_{g}^{2m} = P_{g_0}^{2m} (1)$. In the special case $m=1$,
	one recovers the conformal Laplacian 
	$$P_g^2 = -\Delta_g + \frac{n-2}{4(n-1)} R_g \quad {\rm with} \quad Q_g^2 =\frac{n-2}{4(n-1)} R_g,$$
	where $\Delta_g$ is the Laplace-Beltrami operator of $g$ and $R_g$ is 
	its scalar curvature. 
    Subsequently, Grahan and Zworski \cite{graham-zworski} and Chang and 
    Gonz\'alez \cite{chang-gonzalez} 
    extended these definitions in the case the background metric is the 
    round metric on the sphere to obtain (nonlocal) operators $P_{g_0}^\sigma$ of any 
    order $\sigma\in (0,n/2)$ as well as the corresponding $Q$-curvatures of order 
    $\sigma$. Once again, the leading order part of $P_{g_0}^\sigma$ is $(-\Delta_{g_0})^\sigma$, 
    understood as the principal value of a singular integral operator. 
    We write 
    the formulae for $P_g^2$, $P_g^4$ and $P_g^6$ explicitly in
    Definitions~\ref{def:geometrictensors} and~\ref{def:conformaloperator}.
    Nevertheless, 
    the expressions for $P_g^\sigma$ and $Q_g^\sigma$ for a general $\sigma\in\mathbb R_+$ are 
    more complicated (see for instance \cite{MR3694655}). 
	
	We remark that the nonlinearity 
    on the right-hand side of \eqref{ourequation} has critical growth with respect 
    to the Sobolev embedding $W^{3,2} (\mathbb{R}^n) \hookrightarrow L^{2^{\#}} 
    (\mathbb{R}^n)$, where $2^\# = \frac{2n}{n-6}$. It is well known that this 
	embedding is not compact, reflecting the conformal invariance of the PDE \eqref{ourequation}. 
	
	It will be convenient to transfer the PDE \eqref{ourequation} to Euclidean 
	space, which we can do using the standard stereographic projection (with the 
	north pole in $\Omega$, and thus a regular point of any of the 
	metrics we consider). After stereographic projection, we can write 
	$$g_0 = u_{\rm sph}^{\frac{4}{n-6}} \delta, \qquad u_{\rm sph}
	(x) = \left ( \frac{1+|x|^2}{2} \right )^{\frac{6-n}{2}},$$
	where $\delta$ is the Euclidean metric. 
	In these coordinates our conformal metric takes the 
	form $g = U^{{4}/{(n-6)}}g_0 = (U\cdot u_{\rm sph})^{{4}/{(n-6)}}
	\delta$. 
    Thus, $u\in \mathcal{C}^{\infty}(\mathbb R^n\setminus\Gamma)$ given by $u = U \cdot u_{\rm sph}$ 
    is a positive singular solution to the transformed equation 
	\begin{flalign}\label{limitequationmanypunct}\tag{$\mathcal{Q}_{6,\delta,N}$}
           (-\Delta)^3u=c_{n}u^{\frac{n+6}{n-6}} \quad {\rm in} \quad \mathbb R^n\setminus\Gamma,
	\end{flalign}
    where $\Delta$ is the usual flat Laplacian and $\Gamma$ is the image of the singular 
    set $\Lambda$ under the stereographic projection. As a notational shorthand, we adopt the 
	convention that $U$ refers to a conformal factor relating the metric $g$ to the 
	round metric, {\it i.e.} $g = U^{{4}/{(n-6)}} g_0$, while $u$ refers to a conformal 
	factor relating the metric $g$ to the Euclidean metric, {\it i.e.} $g = u^{{4}/{(n-6)}}
	\delta$, with the two related by $u = U u_{\rm sph}$. 
	
	\begin{remark}\label{rmk:scalinglaw}
    In this Euclidean setting, the transformation law \eqref{transformationlawpaneitz} in particular 
    implies the scaling law for \eqref{limitequationmanypunct}, namely if $u$ solves 
    \eqref{limitequationmanypunct} then so does $u_{\lambda}(x):=\lambda^{\frac{n-6}{2}}u(\lambda x)$
    for any $\lambda >0$. 
    \end{remark}
    
	We study the compactness properties of both the unmarked 
	and the marked moduli spaces of admissible constant sixth $Q$-curvature metrics. We 
	define the unmarked moduli space as 
	\begin{align}\label{unmarkedmodulispace}
		\mathcal{M}^6_{N}=\left\{g\in [g_0] :  \mbox{$g$ is complete on $\mathbb S^n\setminus\Lambda$ 
		with $\#\Lambda=N$}, \;  Q^6_g\equiv Q_n   \right\},
	\end{align}
	and the marked moduli space as 
	\begin{align*}
		\mathcal{M}^6_{\Lambda}=\left\{g\in [g_0] :  \mbox{$g$ is complete on $\mathbb S^n\setminus\Lambda$}, 
		\;  Q^6_g\equiv Q_n   \right\}.\	\end{align*}
	Intuitively, in the unmarked moduli space we fix only the number of punctures, whereas 
	in the marked moduli space, we fix the punctures themselves. We place the Gromov-Hausdorff topology 
	on both the marked and unmarked moduli spaces. 

	The first step to understanding the properties of the marked moduli space $\mathcal{M}^6_{N}$ is 
	to study  the conformally flat equation  
	 \begin{flalign}\tag{$\mathcal P_{6,R}$}\label{ourlocalPDE}
		        (-\Delta)^3u=c_{n}u^{\frac{n+6}{n-6}} \quad {\rm in} \quad \mathbb{B}^*_R,
	\end{flalign}
	where $\mathbb{B}_R^*:=\{x\in\mathbb{R}^n : 0<|x|<R\}$ is the punctured ball for $R<+\infty$. Allowing 
	$R\rightarrow+\infty$ turns \eqref{ourlocalPDE} into the following PDE on the punctured space
     \begin{flalign}\tag{$\mathcal P_{6,\infty}$}\label{ourlimitPDE}
		        (-\Delta)^3u=c_{n}u^{\frac{n+6}{n-6}} \quad {\rm in} \quad \mathbb R^n\setminus\{0\}.
	\end{flalign}
 
	On this subject, the classification of non-singular solutions to \eqref{ourlimitPDE} is provided in \cite{MR1679783}.
    Later on, in \cite{arxiv:1901.01678} it is proved that blow-up limit solutions do exist.
    Recently, based on a topological shooting method, the first and last authors classified 
    all possible solutions to this limit equation \cite{arXiv:2210.04376}.
    
    One can merge these classification	results into the statement below
				\begin{theoremletter}\label{thmA}
				Let $u$ be a positive solution to \eqref{ourlimitPDE}. 
				Assume that
				\begin{enumerate}
						\item[{\rm (a)}] the origin is a removable singularity, then there exists $x_0\in\mathbb{R}^n$ and $\varepsilon>0$ such that $u$ is radially symmetric about $x_0$ and, up to a constant, is given by 
								\begin{equation}\label{sphericalsolutions}
									u_{x_0,\varepsilon}(x)=\left(\frac{2\varepsilon}{1+\varepsilon^{2}|x-x_0|^{2}}\right)^{\frac{n-6}{2}}.
									\end{equation}
								These are called the {\it $($sixth order$)$ spherical solutions} $($or bubbles$)$.
						\item[{\rm (b)}]  the origin is a non-removable singularity, then $u$ is radially symmetric about the origin. Moreover, there exist $\varepsilon_0 \in (0,\varepsilon^*_n]$ and $T\in (0,T_{\varepsilon_0}]$ such that
								\begin{equation}\label{eq:fourthorderemdenfwoler}
									u_{\varepsilon,T}(x)=|x|^{\frac{6-n}{2}}v_{\varepsilon}(\ln|x|+T).
									\end{equation}
								Here $\varepsilon^*_n=K_0^{(n-6)/6}$, $T_\varepsilon\in\mathbb{R}$ is the fundamental period of the unique $T$-periodic bounded solution $v_T$ to the following sixth order IVP, 
								\begin{equation*}
									\begin{cases}
										v^{(6)}-K_4v^{(4)}+K_2v^{(2)}-K_0v=c_nv^{\frac{n+6}{n-6}}\\
										v(0)=\varepsilon_0,\ v^{(2)}(0)=\varepsilon_2,\ v^{(4)}(0)=\varepsilon_4,\ v^{(1)}(0)=v^{(3)}(0)=v^{(5)}(0)=0,
										\end{cases}
									\end{equation*}
								where $K_4,K_2,K_0,\varepsilon^*_n$ are dimensional constants $\varepsilon_0\in (0,\varepsilon^*_n]$ $($See \eqref{coefficients}$)$. 
								These are called $($sixth order$)$ Emden--Fowler solutions.
					\end{enumerate}  
			\end{theoremletter}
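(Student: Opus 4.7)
The plan is to treat the two cases separately, in each case invoking (and indicating how to reproduce) the classification results already available in the literature on critical higher-order conformally invariant equations.

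For case (a), if the origin is a removable singularity, then $u$ extends smoothly to a positive solution of $(-\Delta)^3 u = c_n u^{(n+6)/(n-6)}$ on all of $\mathbb{R}^n$. The goal is to show that $u$ must be a bubble of the form \eqref{sphericalsolutions}. I would invoke the moving-sphere argument of Wei--Xu \cite{MR1679783}: after verifying, via elliptic regularity and removability, that $u$ has the correct decay at infinity (or, alternatively, a superharmonic integral representation for $(-\Delta)u$ and $(-\Delta)^2 u$), one plays a family of Kelvin transforms against $u$ and shows that there exists a critical radius at every point. A standard calculus lemma then forces $u$ to be radially symmetric about some $x_0$ and to satisfy a one-parameter scaling relation, which determines it uniquely up to the conformal group of the sphere. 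The main technical input here is controlling the signs of the three iterated Laplacians of $u$, which in this order-six setting is done by writing the equation as a cascade of three Poisson equations and using the integral representation for $(-\Delta)^{-1}$.

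For case (b), when the origin is a non-removable singularity, the argument has three main steps. The first step is to establish radial symmetry about the origin. Since one cannot directly apply moving spheres across the singular point, I would instead run the moving-planes method with respect to every hyperplane through $0$, adapted to the polyharmonic setting by decomposing $(-\Delta)^3 u = f$ into the system $(-\Delta)u_1 = u_2$, $(-\Delta)u_2 = u_3$, $(-\Delta) u_3 = f$ and exploiting the maximum principle for each factor, together with a Kelvin transform centered at $0$ to control behavior at infinity. This forces $u(x) = u(|x|)$. The second step is the Emden--Fowler (or cylindrical) change of coordinates $u(x) = |x|^{(6-n)/2} v(\ln |x| + T)$, which converts the radial PDE into the autonomous sixth-order ODE displayed in the theorem statement. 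A direct (though tedious) computation of $(-\Delta)^3$ in polar coordinates produces the constant coefficients $K_4, K_2, K_0$ and shows that the nonlinearity is preserved; evenness/oddness of the derivatives at $t=0$ follows from the radial symmetry of $u$.

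The third step, and the main obstacle, is the classification of bounded positive solutions to the resulting ODE. Unlike the second-order Emden--Fowler equation, there is no usable Hamiltonian and the phase space is five-dimensional, so standard phase-plane techniques fail. Here I would appeal to (and, if redeveloping, reprove via) the topological shooting method of \cite{arXiv:2210.04376}: parameterize initial data by $(\varepsilon_0, \varepsilon_2, \varepsilon_4) = (v(0), v^{(2)}(0), v^{(4)}(0))$ with odd derivatives zero, and study the shooting map sending these parameters to the forward asymptotics. Using a degree/continuity argument on the boundary of the set of globally positive bounded solutions, one identifies a two-dimensional submanifold of initial conditions producing periodic orbits, and shows these are parametrized exactly by $\varepsilon_0 \in (0, \varepsilon_n^*]$ with fundamental period $T \in (0, T_{\varepsilon_0}]$, where $\varepsilon_n^*$ is the value at the cylindrical equilibrium. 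Combining the three steps gives the representation \eqref{eq:fourthorderemdenfwoler} and completes the proof.
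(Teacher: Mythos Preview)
Your proposal is essentially correct and matches the paper's treatment: Theorem~A is not proved in this paper at all but is quoted from the literature, with part~(a) attributed to Wei--Xu \cite{MR1679783} and part~(b) to the topological shooting argument of Andrade--Wei \cite{arXiv:2210.04376}, exactly the two sources you invoke and whose methods you sketch.

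One small correction to your third step: it is not true that ``there is no usable Hamiltonian'' for the sixth-order ODE. The paper itself exhibits one in Section~\ref{sec:radialpohozaev}, namely the radial energy $\mathcal{H}_{\rm rad}(v)$ in \eqref{hamiltonian1}, which is a genuine first integral of \eqref{ourODE}. The obstruction is rather that a single conserved quantity on a six-dimensional phase space does not reduce the dynamics enough to classify orbits by phase-plane methods, which is why the shooting argument of \cite{arXiv:2210.04376} is needed. This does not affect the validity of your outline, but the phrasing should be adjusted.
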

            
             In \cite{arxiv:1901.01678}, it is shown that solutions to \eqref{ourlocalPDE} with $R<+\infty$ satisfy a priori bound near the isolated singularity, which implies that they behave like the solutions to the limit equation near the isolated singularity

       \begin{theoremletter}\label{thmB}
				Let $u$ be a positive singular solution to \eqref{ourlocalPDE}.
				Suppose that $-\Delta u\geqslant 0$ and $\Delta^2 u\geqslant 0$.  
				Then
						\begin{equation}\label{asymptotics}
							u(x)=(1+\mathrm{o}(1))u_{\varepsilon,T}(|x|) \quad {\rm as} \quad x\rightarrow0,
							\end{equation}    
                where $u_{\varepsilon,T}$ belongs to the family \eqref{eq:fourthorderemdenfwoler}.
			\end{theoremletter}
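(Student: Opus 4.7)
The plan is to combine sharp two-sided bounds near the singularity with a blow-up/compactness argument that identifies the asymptotic profile via the full classification in Theorem~\ref{thmA}. The sign conditions $-\Delta u \geq 0$ and $\Delta^2 u \geq 0$ are crucial here because they allow us to apply the maximum principle to each factor of $(-\Delta)^3=(-\Delta)(-\Delta)(-\Delta)$, reducing many steps to standard second order arguments and yielding Harnack estimates along dyadic annuli.

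First I would prove the two-sided estimate
\[
C_1 |x|^{(6-n)/2} \leq u(x) \leq C_2 |x|^{(6-n)/2} \quad \text{for $x$ near $0$.}
\]
For the upper bound, suppose by contradiction that $u(x_k)|x_k|^{(n-6)/2}\to \infty$ along some sequence $x_k\to 0$. Rescaling $v_k(y)=\lambda_k^{(n-6)/2}u(x_k+\lambda_k y)$ with $\lambda_k$ tuned so that $v_k(0)=1$, applying elliptic regularity for the factored tri-Laplacian together with the Harnack inequality, one extracts a limit $v_\infty$ that is a positive entire solution of \eqref{ourlimitPDE}. Theorem~\ref{thmA} forces $v_\infty$ to be either a spherical bubble \eqref{sphericalsolutions} or an Emden--Fowler solution \eqref{eq:fourthorderemdenfwoler}, neither of which is consistent with the blown-up growth rate, a contradiction. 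The lower bound comes from a removable singularity argument: if $u(x_k)|x_k|^{(n-6)/2}\to 0$ along some sequence, then integration by parts combined with the super-poly-harmonicity hypotheses shows that $u$ extends smoothly across the origin, contradicting the non-removability of the singularity.

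With these bounds in hand, I would pass to the Emden--Fowler cylindrical coordinates $u(x)=|x|^{(6-n)/2}v(s,\theta)$ with $s=\ln|x|$, converting \eqref{ourlocalPDE} into a semilinear autonomous equation on the half-cylinder $(-\infty,\ln R)\times \mathbb{S}^{n-1}$. The two-sided bounds give a uniform $L^\infty$ bound on $v$, and standard regularity then yields uniform $\mathcal{C}^k$ bounds on all translates $v(\cdot+t,\theta)$ as $t\to-\infty$, which subconverge to an entire cylindrical solution $v_\infty$. A moving-planes argument run on shrinking annular shells, propagating monotonicity through each factor of $-\Delta$ via the sign conditions, forces $v_\infty$ to be independent of $\theta$. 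Hence $v_\infty$ solves the sixth order ODE in Theorem~\ref{thmA}(b), so is of the form $v_{\varepsilon_0}(\cdot+T)$ for admissible parameters. To upgrade subsequential convergence to a true limit, I would use the phase portrait of the autonomous ODE: the trajectory $s\mapsto (v(s),v'(s),\dots,v^{(5)}(s))$ lies in a compact invariant region by the two-sided bounds, so its $\alpha$-limit set is connected and consists of periodic Emden--Fowler orbits; since distinct such orbits are separated in phase space, the $\alpha$-limit set reduces to a single trajectory, giving \eqref{asymptotics}.

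The main obstacle is the moving-planes step in the sixth order setting: unlike the second order case, one must either run moving planes on the coupled system $(u,-\Delta u,\Delta^2 u)$, propagating monotonicity through all three components simultaneously with the aid of the sign conditions, or reformulate the problem via a triple iterated Riesz potential and work with integral inequalities. Either route requires careful handling of boundary terms on the shrinking annuli near the singularity, and it is here that the sign assumptions $-\Delta u\geq 0$ and $\Delta^2 u\geq 0$ enter in an essential way rather than merely as convenient hypotheses.
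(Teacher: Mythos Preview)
The paper does not contain a proof of Theorem~\ref{thmB}: it is stated as a quoted result, attributed in the preceding sentence to Jin and Xiong \cite{arxiv:1901.01678} (note also the \texttt{theoremletter} environment, which the paper reserves for results imported from the literature, as with Theorem~\ref{thmA}). Consequently there is nothing in the present paper against which to compare your argument.

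That said, your outline is broadly in the spirit of the Jin--Xiong approach: an a priori two-sided bound $C_1|x|^{(6-n)/2}\le u(x)\le C_2|x|^{(6-n)/2}$ near the origin, passage to cylindrical variables, compactness of translates, asymptotic radial symmetry, and identification of the limit among the Emden--Fowler solutions. One point worth flagging is your final step. The phase-portrait argument you sketch (``the $\alpha$-limit set is connected and consists of periodic orbits, distinct orbits are separated, hence the limit set is a single orbit'') is not quite enough as written: the Emden--Fowler periodic orbits form a continuous one-parameter family in the six-dimensional phase space, so they are \emph{not} isolated from one another, and a connected $\alpha$-limit set could a priori contain a continuum of them. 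The standard way to rule this out is to use a conserved quantity---here the Pohozaev/Hamiltonian energy $\mathcal{H}_{\rm cyl}$ of Section~\ref{sec:radialpohozaev}---which is constant along the trajectory and strictly monotone in the necksize parameter $\varepsilon$, thereby pinning down a single orbit. Without invoking such an invariant, the upgrade from subsequential to full convergence has a genuine gap.
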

   
			These two results combined motivate the following definition
			\begin{definition}
			    Let $g\in \mathcal{M}_N$ with a singular set $\Lambda\subset 
			    \mathbb{S}^n$, $\# \Lambda = N$, and let $p_j\in \Lambda$. Let $g = U^{{4}/{(n-6)}} g_0 
			    = u^{{4}/{(n-6)}} \delta$
			    where we choose stereographic coordinates centered at $p_j$. By \eqref{asymptotics} 
			    we know $u(x) = u_{\varepsilon_j, T_j} (|x|)(1+\mathrm{o}(|x|))$ for some 
			    $\varepsilon_j \in (0,\varepsilon_n^*]$. This $\varepsilon_j$ is the 
			    asymptotic necksize of the metric $g$ at the puncture $p_j$. 
			\end{definition}
	
	Now we have conditions to state our main compactness theorem for the unmarked moduli space 
    \begin{theorem}\label{maintheorem} 
        Let $N\geqslant 3$ and let $0<\delta_1,\delta_2<1$ be positive real numbers. 
        Then the set 
        \begin{equation*}
            \mathcal{Q}^6_{\delta_1, \delta_2} = \left\{ g \in \mathcal{M}^6_N : 
        \ud_{g_0} (p_j, p_\ell) \geqslant \delta_1 \;
        {\rm for \ each} \; j \neq \ell \; {\rm and} \;\varepsilon_j(g) \geqslant \delta_2 \right\}.
        \end{equation*}
        is sequentially compact with respect to the Gromov--Hausdorff topology. 
    \end{theorem}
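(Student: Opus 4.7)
The plan is to argue by sequential compactness: take an arbitrary sequence $\{g_k\}\subset \mathcal{Q}^6_{\delta_1,\delta_2}$, with $g_k = U_k^{4/(n-6)}g_0$ having singular set $\Lambda_k=\{p_{1,k},\dots,p_{N,k}\}$ and necksizes $\varepsilon_{j,k}:=\varepsilon_j(g_k)\geqslant\delta_2$. Since $\mathbb{S}^n$ is compact, after passing to a subsequence we may assume $p_{j,k}\to p_{j,\infty}$, and by the separation hypothesis $\mathrm{d}_{g_0}(p_{j,\infty},p_{\ell,\infty})\geqslant\delta_1>0$, so $\Lambda_\infty=\{p_{1,\infty},\dots,p_{N,\infty\}}$ consists of $N$ distinct punctures. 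After stereographic projection centered away from $\Lambda_\infty$, each $g_k$ corresponds to a singular solution $u_k$ of \eqref{limitequationmanypunct} on $\mathbb{R}^n\setminus\Gamma_k$ with $\Gamma_k\to\Gamma_\infty$. The strategy is then the two-sided estimate philosophy of Pollack: establish uniform upper and lower bounds for $u_k$ on every set of the form $\{x : \mathrm{dist}(x,\Gamma_\infty)\in[r,R]\}$, pass to a smooth limit by elliptic regularity, and finally upgrade smooth convergence on the regular set to Gromov--Hausdorff convergence of the complete metric completions.

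For the upper bound, the plan is to perform a moving-plane/Kelvin-transform argument adapted to the tri-Laplacian on each punctured neighborhood, as in the fourth-order analysis of the Paneitz equation. Working near a fixed puncture $p_j$, one uses the positivity conditions $-\Delta u_k\geqslant 0$ and $\Delta^2 u_k\geqslant 0$ required by Theorem~\ref{thmB} (which hold for admissible solutions) together with the conformal covariance \eqref{transformationlawpaneitz} to obtain monotonicity of $u_k$ with respect to reflections in suitable hyperplanes. This yields a uniform bound of the form $u_k(x)\leqslant C|x-p_{j,k}|^{(6-n)/2}$ on a fixed punctured neighborhood, with $C$ independent of $k$. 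Away from $\Lambda_\infty$ one rules out bubbling by a blow-up argument: if $u_k(x_k)\to\infty$ at an interior point $x_k\to x_\infty\notin\Lambda_\infty$, the standard rescaling $v_k(y):=\lambda_k^{(n-6)/2}u_k(x_k+\lambda_k y)$ with $\lambda_k=u_k(x_k)^{-2/(n-6)}$ converges to a regular entire solution (a spherical bubble from Theorem~\ref{thmA}(a)) and a Pohozaev-type integral identity for $P^6$ against $u_k$ on shrinking annuli around $x_k$ produces a contradiction with the assumed $Q^6$-curvature sign.

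For the lower bound, the necksize hypothesis $\varepsilon_{j,k}\geqslant\delta_2$ is the key tool. By Theorem~\ref{thmB}, near each $p_{j,k}$ we have $u_k(x)=(1+\mathrm{o}(1))u_{\varepsilon_{j,k},T_{j,k}}(|x-p_{j,k}|)$, and the explicit form \eqref{eq:fourthorderemdenfwoler} together with the lower bound on $\varepsilon_{j,k}$ produces a uniform positive lower bound $u_k(x)\geqslant c|x-p_{j,k}|^{(6-n)/2}$ on a small punctured neighborhood, with $c=c(\delta_2)>0$. Away from the punctures, a lower bound follows from Harnack-type inequalities for the positive solution $u_k$ of a sixth-order equation with known upper bound, which can be reduced to the second-order case by the factorization \eqref{sphericalGJMSoperator} combined with the maximum principle applied to the two auxiliary positive functions $-\Delta u_k$ and $\Delta^2 u_k$. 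Combining the uniform two-sided bounds with standard interior elliptic estimates for $(-\Delta)^3$ gives uniform $\mathcal{C}^{6,\alpha}_{\mathrm{loc}}$ bounds on $\mathbb{R}^n\setminus\Gamma_\infty$, so a subsequence converges smoothly on compact subsets of $\mathbb{R}^n\setminus\Gamma_\infty$ to a positive $u_\infty$ which solves \eqref{limitequationmanypunct}, is singular at each $p_{j,\infty}$ (by the lower bound), and hence defines a limit metric $g_\infty\in\mathcal{M}^6_N$.

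Finally, the plan is to upgrade smooth subconvergence on the regular set to Gromov--Hausdorff convergence of $(\mathbb{S}^n\setminus\Lambda_k,g_k)$ to $(\mathbb{S}^n\setminus\Lambda_\infty,g_\infty)$. The uniform upper and lower bounds on $u_k$ imply that the Riemannian distance functions $\mathrm{d}_{g_k}$ are uniformly comparable to $\mathrm{d}_{g_\infty}$ on every annular region $\{r\leqslant\mathrm{dist}_{g_0}(\cdot,\Lambda_\infty)\leqslant R\}$, so these regions converge in Gromov--Hausdorff sense. The necksize lower bound guarantees that the asymptotic necks have controlled geometry (the Delaunay-type period $T_{j,k}$ stays bounded and the conformal factor has matching leading behavior), and the total $g_k$-diameter of the region $\{\mathrm{dist}_{g_0}(\cdot,\Lambda_\infty)\leqslant r\}$ tends to zero as $r\to 0$ uniformly in $k$. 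A standard exhaustion/diagonal argument, choosing $r_k\to 0$ slowly and concatenating $\varepsilon_k$-isometries on the exhausting annuli with the negligible cusp regions, then produces Gromov--Hausdorff $\varepsilon_k$-approximations between $(\mathbb{S}^n\setminus\Lambda_k,g_k)$ and $(\mathbb{S}^n\setminus\Lambda_\infty,g_\infty)$. The main obstacle is the first step: the moving-plane estimate in the sixth-order setting requires careful treatment of the boundary terms produced by the extra Laplacian factors and the use of the sign conditions from Theorem~\ref{thmB}, and establishing exclusion of interior bubbles via a tri-Laplacian Pohozaev identity is the most delicate point.
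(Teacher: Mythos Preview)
Your outline has the right overall shape (extract convergent punctures, uniform upper bound, uniform lower bound, pass to a limit, verify completeness), but the central step---the uniform lower bound---has a genuine gap. You invoke Theorem~\ref{thmB} to write $u_k(x)=(1+\mathrm{o}(1))u_{\varepsilon_{j,k},T_{j,k}}(|x-p_{j,k}|)$ and then claim that $\varepsilon_{j,k}\geqslant\delta_2$ yields $u_k(x)\geqslant c(\delta_2)|x-p_{j,k}|^{(6-n)/2}$ uniformly in $k$ on a fixed punctured neighborhood. But the $\mathrm{o}(1)$ in Theorem~\ref{thmB} is not known to be uniform along the sequence; indeed the paper's own pointwise lower bound (Proposition~\ref{prop:lowerestimate}) explicitly has its constant depending on the individual solution $u$, not just on $\delta_2$. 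Without uniformity near the punctures, Harnack cannot propagate a uniform positive lower bound to the interior either, so your route to $U_\infty>0$ and to singularity of the limit at every $p_{j,\infty}$ does not close. The paper circumvents this by a different mechanism: it first passes to a (possibly vanishing) limit $U_\infty$ using only the upper bound, and then argues by contradiction. If $U_\infty(p_*)=0$, one rescales $\widehat U_k=U_k/U_k(p_*)$, uses Harnack to extract a limit $\widehat U_\infty$ solving the \emph{linear} equation $P^6_{g_0}\widehat U_\infty=0$, identifies $\widehat U_\infty$ as a combination of Green functions $G_{p_{j,\infty}}$, and then computes that the rescaled cylindrical Pohozaev integral stays bounded while the original one is $\varepsilon_k^2$ times it, forcing $\mathcal{P}_{\rm rad}(g_k,p_{1,k})\to 0$. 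By Proposition~\ref{prop:necksizexpohozaev} this contradicts $\varepsilon_{1,k}\geqslant\delta_2$. Completeness of $g_\infty$ is handled the same way. This Pohozaev/Green-function rescaling is the missing idea in your plan.

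A secondary issue is your mechanism for ruling out interior bubbling in the upper bound: you say a Pohozaev identity on shrinking annuli ``produces a contradiction with the assumed $Q^6$-curvature sign,'' but the blow-up limit is a standard sphere with the \emph{same} positive $Q^6$-curvature, so there is no sign obstruction. The paper's device is different: Lemma~\ref{lm:geodesicallyconvex} (moving planes applied to the system $(u,-\Delta u,\Delta^2 u)$) shows every round geodesic sphere in $\Omega$ is $g$-convex, while a nearly-spherical bubble forces a nearby geodesic sphere to be mean-concave; this geometric contradiction is what excludes interior blow-up in Proposition~\ref{prop:upperestimate}.
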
 
    
    \begin{remark}
        Notice that as a consequence of Theorem~\ref{thmA} {\rm (a)}, it follows that $\mathcal{M}_1=\varnothing$.
        Also, from Theorem~\ref{thmA} {\rm (b)}, we have that $\mathcal{M}_{p_1,p_2}
        = (0,\varepsilon_n^*]$ for any $p_1\neq p_2$, where $\varepsilon_n^*\in(0,1)$.
        Moreover, it follows that $\mathcal{M}_2 = (0,\varepsilon_n^*] \times ((\mathbb S^n \times 
        \mathbb S^n \setminus{\rm diag})/SO(n+1,1))$, where the group $SO(n+1,1)$ of conformal 
        transformations acts on each $\mathbb S^n$ factor simultaneously. These metrics are called the 
        Delaunay metrics.
        Furthermore, they all correspond to a doubly punctured sphere and are rotationally invariant.  
    \end{remark}
    
    \begin{remark} It is worthwhile to now describe the possible degenerations of a 
    sequence of metrics in $\mathcal{M}_N^6$. 
    Let $\{ g_k = (U_k)^{{4}/{n-6}} g_0 \} 
    \in \mathcal{M}_N^6$ be a sequence that leaves every compact subset. We denote 
    the singular set of $g_k$ by $\Lambda_k = \{ p_{1,k}, \dots, p_{N,k}\}$ 
    and the asymptotic necksize of $g_k$ at the puncture $p_{j,k}$ as $\varepsilon_{j,k}$. 
    Then either $\lim_{k\rightarrow \infty} \varepsilon_{j,k} = 0$ for some $j$ 
    or $\lim_{k \rightarrow \infty} p_{j,k} = \lim_{k \rightarrow \infty} p_{j', k}$ 
    for some $j \neq j'$. 
    We sketch these two degenerations in Figure~\ref{degenerations_fig}. 
    $($It is possible that both degenerations happen simultaneously.$)$ 
    In either 
    case, in the limit one obtains a metric $g_\infty \in \mathcal{M}_{N'}^6$ for some 
    $N' < N$. 
    In this way, one can compactify the moduli space $\mathcal{M}_N^6$ by 
    gluing copies of $\mathcal{M}_{N'}^6$ for $N'<N$ to $\partial \mathcal{M}_N^6$. 
    We speculate that this compactification would not give a smooth manifold with 
    boundary, but rather that $\partial \mathcal{M}_N^6$ is in general a stratified space. 
    \end{remark} 
    
    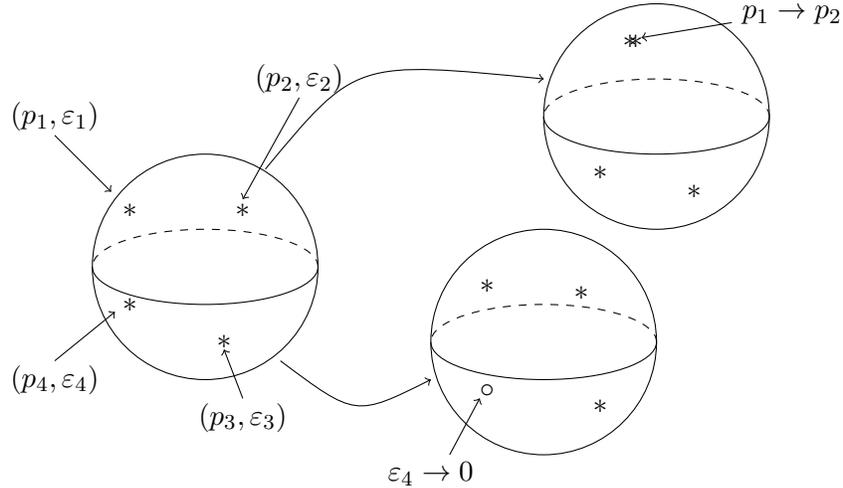
\begin{figure} [h]
    \centering
    \begin{tikzpicture}
    \draw (-2,0) circle (1.5cm);
    \draw [dashed] (-0.5,0) arc (0:180:1.5 and 0.5); 
    \draw (-3.5,0) arc (-180:0:1.5 and 0.5); 
    \node at (-3,0.75) {$*$}; 
    \draw [->] (-4,1.75) -- (-3.25,1); 
    \node at (-4, 2) {$(p_1, \varepsilon_1)$}; 
    \node at (-3, -0.5) {$*$}; 
    \draw [->] (-4,-1.25) -- (-3.2,-0.6);
    \node at (-4, -1.5) {$(p_4, \varepsilon_4)$};
    \node at (-1.5,0.75) {$*$}; 
    \draw [->] (-0.75,2.25) -- (-1.5,0.9);
    \node at (-0.75,2.5) {$(p_2,\varepsilon_2)$}; 
    \node at (-1.75, -1) {$*$}; 
    \draw [->] (-1.5,-1.75) -- (-1.75,-1.1); 
    \node at (-1.5,-2) {$(p_3, \varepsilon_3)$}; 
    \draw (4,2) circle (1.5cm); 
    \draw [dashed] (5.5,2) arc (0:180:1.5 and 0.5); 
    \draw (2.5,2) arc (-180:0:1.5 and 0.5); 
    \node at (3.65,3.0) {$*$}; 
    \node at (3.725, 3.0) {$*$}; 
    \draw [->] (5,3.25) -- (3.8, 3.05); 
    \node at (5.80,3.35) {$p_1 \rightarrow p_2$};
    \node at (4.5, 1.0) {$*$}; 
    \node at (3.25, 1.25) {$*$}; 
    \draw (2.5,-1) circle (1.5cm); 
    \draw [dashed] (4,-1) arc (0:180: 1.5 and 0.5); 
    \draw (1,-1) arc (-180:0:1.5 and 0.5); 
    \node at (1.75,-0.25) {$*$}; 
    \node at (3.0, -0.35) {$*$}; 
    \node at (3.25, -1.85) {$*$}; 
    \node at (1.75, -1.65) {$\circ$};  
    \draw [->] (1.25, -2.5) -- (1.65,-1.75); 
    \node at (1.0, -2.75) {$\varepsilon_4 \rightarrow 0$}; 
    \draw [->] (-1,-1.25) .. controls (0,-2) .. (1,-1.5); 
    \draw [->] (-1.2, 1.3) ..  controls (0,2.75) ..  (2.5,2.5); 
    \end{tikzpicture}
    \caption{The two possible degenerations in the moduli space $\mathcal{M}_4^6$.} 
    \label{degenerations_fig}
    \end{figure}
    
	Let us compare our main results with the second and fourth order analogs.
	In the same spirit as our main result, it was proved in \cite{MR1266101} and \cite{10.1093/imrn/rnab306} that the moduli sets below
	\begin{align}\label{modulispace2th}
    \mathcal{Q}^2_{\delta_1, \delta_2}\subset{\mathcal{M}}^2_{N}=\left\{g\in [g_0] : \mbox{$g$ is complete}  \; \mbox{and} \; R_g\equiv 2^{-1}(n-4) \right\}.
	\end{align}
	and 
	\begin{align}\label{modulispace4th}
	\mathcal{Q}^4_{\delta_1, \delta_2}\subset{\mathcal{M}}^4_{N}=\left\{g\in [g_0] : \mbox{g is complete},  \; R_g\geqslant 0, \; \mbox{and} \; Q^4_g\equiv 2^{-3}n(n^2-4) \right\}.
	\end{align}
	are also sequentially compact.

    Based on classifications results like Theorem~\ref{thmA} and Theorem~\ref{thmB}, much more is known about the moduli spaces in \eqref{modulispace2th} and \eqref{modulispace4th}. 
    In fact, in some classical works of Mazzeo and Pacard \cite{MR1425579} used gluing techniques to prove that there exists a family of solutions in \eqref{modulispace2th}.
    Furthermore, Mazzeo, Pollack, and Uhlenbeck \cite{MR1356375}, this space turns out to be a finite-dimensional analytic submanifold furnished with a natural Lagrangian structure.
    On the moduli space \eqref{modulispace4th}, less is known; it is not proved yet whether this is non-empty. 
    Some of the authors in \cite{arXiv:2110.05234} proved that this property holds for non-degenerate manifolds with a suitable hypothesis on the vanishing of the Weyl tensor, However, the standard round sphere is not included in this class.
	
 	Inspired by the arguments in \cite{MR1266101}, the proof of Theorem~\ref{maintheorem} is divided into three 
 	parts that we describe as follows.	
	First, we need to introduce the so-called {sixth order geometric Pohozaev invariant}, which is related to 
	the Hamiltonian energy of the limiting ODE \cite{MR929283,MR0192184}. 
    Second, we obtain an {\it a priori} upper and for positive singular solutions to \eqref{ourequation}, 
    estimates which are accomplished by combining a sliding method, a blow-up argument, and a Harnack inequality.
    From this, we obtain uniform bound on certain H\"{o}lder norms, which by compactness, allows us to 
    extract a limit, up to subsequence.
    Third, we use the first order asymptotic expansion for the Green function of the sixth order GJMS operator 
    near the pole and the fact the necksizes are away from zero shows that this limit is non-trivial.
    Finally, one can apply a removable singularity theorem to conclude the proof.

    The rest of the paper is divided as follows. 
	In Section~\ref{sec:cylcoordinates}, we define the logarithmic cylindrical change of variables 
	and we use the conformal invariance between the punctured space and the cylinder to 
	transform \eqref{ourequation} into a PDE on the cylinder. 
	In Section~\ref{sec:delaunaymetrics}, we describe all singular solutions on a doubly 
	punctured sphere. These Delaunay metrics are especially important because they provide 
	asymptotic models for the metrics in $\mathcal{M}_N^6$ near a given puncture point. 
	In Section~\ref{sec:radialpohozaev}, we define the sixth order Pohozaev invariants 
	associated with \eqref{ourequation}. In Section~\ref{sec:estimates}, we prove {\it a priori} 
	upper and lower bound estimates for positive singular solutions to \eqref{ourequation}. 
	In Section~\ref{sec:mainresult}, we prove the compactness statement in Theorem~\ref{maintheorem}.
	
	\begin{remark} 
    Several of our supporting results below generalize to the Paneitz operators 
    and $Q$-curvatures of any order $\sigma \in (0,n/2)$, at least in the conformally 
    flat setting. In particular, the convexity result of Lemma~\ref{lm:geodesicallyconvex} 
    and the upper bound of Proposition~\ref{prop:upperestimate} both generalize, and may 
    be of independent interest. On the other hand, some parts of the proof of Theorem~\ref{maintheorem}
    do not carry over. In particular, at this time we cannot classify all two-ended constant 
    $Q^\sigma$-curvature metrics on the sphere, which is very important for our proof. 
    \end{remark}

	\section{Cylindrical coordinates}\label{sec:cylcoordinates}
	This section is devoted to constructing a change of variables that transforms the local 
	singular PDE \eqref{ourlocalPDE} problem into a nice ODE problem with constant coefficients.
	This is the conformally flat problem associated with \eqref{ourequation}.
	
	\begin{definition}\label{def:cylindricaltransformation}
	We define the sixth order autonomous Emden-Fowler change of variables as 
	follows. Let $R>0$ and $T= -\ln R$ and $\mathcal{C}_T = (T,\infty) \times 
	\mathbb{S}^{n-1}$. We then define 
	\begin{equation} \label{cylindricaltransformation}
	\mathfrak{F}: \mathcal{C}^\infty (B_R^*) \rightarrow \mathcal{C}^\infty
	(\mathcal{C}_T), \qquad \mathfrak{F}(u) (t,\theta) = e^{-\gamma_n t} 
	u(e^{-t} \theta) = v(t,\theta) , 
	\end{equation} 
	where $\gamma_n = \frac{n-6}{2}$. 
	\end{definition} 
	It is easy to show the inverse transform is given by 
	$$\mathfrak{F}^{-1} : \mathcal{C}^\infty(\mathcal{C}_T) 
	\rightarrow \mathcal{C}^\infty (B_R^*), \qquad \mathfrak{F}^{-1} 
	(v)(x) = |x|^{-\gamma_n} v(-\ln |x|, x/|x|) = u(x).$$ 
	
	Using $\mathfrak{F}$ and performing a lengthy computation we arrive at the following 
	sixth order nonlinear PDE on $\mathcal{C}_T$: 
	\begin{equation}\tag{$\mathcal C_{T}$}\label{ourPDEcyl}
		-P^6_{\rm cyl}v=c_{n}v^{\frac{n+6}{n-6}} \quad {\rm on} \quad {\mathcal{C}}_T.
	\end{equation}
	Here $P^6_{\rm cyl}$ is the sixth order GJMS operator associated to 
	the cylindrical metric $g_{\rm cyl} = dt^2 + d\theta^2$ on $\mathbb{R} \times 
	\Ss^{n-1}$, and it is given by
	\begin{align*}
		P^6_{\rm cyl}:=P^6_{\rm rad}+P^6_{\rm ang},
	\end{align*}
	where
	\begin{align*}
		P^6_{\rm rad}:=\partial_t^{(6)}-K_{4}\partial_t^{(4)}+K_{2}\partial_t^{(2)}-K_{0}
	\end{align*}
	and
	\begin{align*}
		P^6_{\rm ang}:=2\partial^{(4)}_t\Delta_{\theta}-J_3\partial^{(3)}_t\Delta_{\theta}+J_2\partial^{(2)}_t\Delta_{\theta}-J_1\partial_t\Delta_{\theta}+J_0\Delta_{\theta}+3\partial^{(2)}_t\Delta^2_{\theta}-L_0\Delta^2_{\theta}+\Delta^3_{\theta}
	\end{align*}
	with
	\begin{align}\label{coefficients}
		&\nonumber K_{0}=2^{-8}(n-6)^2(n-2)^2(n+2)^2&\\\nonumber 
		&K_{2}=2^{-4}(3n^4-24n^3+72n^2-96n+304)&\\\nonumber
		&K_{4}=2^{-2}(3n^2-12n+44)&\\
		&J_{0}=2^{-3}(3n^4-18n^3-192n^2+1864n-3952)&\\\nonumber
		&J_{1}=2^{-1}(3n^3+3n^2-244n+620)&\\\nonumber
		&J_{2}=2 n^2+13n-68&\\\nonumber
		&J_{3}=2 (n+1)&\\\nonumber
		&L_{0}=2^{-2}(3 n^2-12n-20)&
	\end{align}
	dimensional constants.
	
	\begin{remark}
	     The following decomposition holds
		       \begin{equation*}
		           P^6_{\rm rad}=L_{\lambda_1}\circ L_{\lambda_2}\circ L_{\lambda_3},
		       \end{equation*}
		       where $L_{\lambda_j}:=-\partial_t^2+\lambda_j$ for $j=1,2,3$ with
		       \begin{equation*}
		            \lambda_1=\frac{n-6}{2}, \quad  \lambda_2=\frac{n-2}{2}, \quad {\rm and} \quad \lambda_3=\frac{n+2}{2}.
		        \end{equation*}
                We refer the reader to \cite[Proposition~2.7]{arXiv:2210.04376} for the proof.
	\end{remark}
		
	\section{Spherical and Delaunay metrics}\label{sec:delaunaymetrics}
	In this section, we present some particular model metrics on the moduli space. 
	Let $p_1,p_2\in\mathbb S^n$, which without loss of generality can be chosen such 
	that $p_1=\mathbf{e}_n$ is the north pole and $p_2=-p_1$ is the south pole.
	The conformal factor $U:\mathbb{S}^{n} \backslash \{ p_1, p_2\} \rightarrow (0,\infty)$ 
	determines a metric $g\in\mathcal{M}_{p_1,p_2}$ and after  composing with 
	a stereographic projection it corresponds to a singular solution to \eqref{ourlimitPDE}
		
		Applying the cylindrical transform \eqref{cylindricaltransformation} to this 
		PDE in turn yields 
		\begin{equation*}
		-P^6_{\rm cyl}v=c_{n}v^{\frac{n+6}{n-6}} \quad {\rm on} \quad {\mathcal{C}}_\infty:=\mathbb R\times\mathbb S^n.
	    \end{equation*}
	    Next, using those solutions to \eqref{ourequation} are radially symmetric with respect 
	    to the origin, \eqref{ourPDEcyl} reduces to a sixth order ODE problem
	    \begin{equation}\tag{$\mathcal{O}_{6,\infty}$}\label{ourODE}
		        -v^{(6)}+K_4v^{(4)}-K_2v^{(2)}+K_0v=c_nv^{\frac{n+6}{n-6}} \quad {\rm in} \quad \mathbb R.
		\end{equation}
		From this last formulation, we quickly compute the cylindrical solution 
		$$v_{\rm cyl}(t) = \left ( \frac{K_0}{c_n} \right )^{\frac{12}{n-6}} 
		= \left ( \frac{K_0}{c_n} \right )^{\frac{6}{\gamma_n}}= \varepsilon_n^* > 0,$$ 
		which is the only constant solution. Transforming back from the cylinder 
		to $\mathbb{R}^n \backslash \{ 0 \}$ we see 
		$$u_{\rm cyl}(x) = \mathfrak{F}^{-1} (v_{\rm cyl}) = \left ( \frac{K_0}{c_n}
		\right )^{\frac{12}{n-6}} |x|^{-\gamma_n}, \quad g_{\rm cyl} 
		= u_{\rm cyl}^{\frac{4}{n-6}} \delta .$$ 
	
	We have already encountered the spherical solution, given by 	
        \begin{equation}\label{standardsphericalsolutions}
            u_{\rm sph}(x)=\left ( \frac{1+|x|^2}{2} \right )^{-\gamma_n} \quad {\rm and} 
            \quad g_{\rm sph}=u_{\rm sph}^{4/(n-6)}\delta, 
        \end{equation}
        which is the particular case of \eqref{sphericalsolutions} with $\epsilon = 1$ 
        and $x_0 = 0$. Applying the Emden-Fowler change of variables 
        to $u_{\rm sph}$ we obtain 
        $$v_{\rm sph}(t,\theta) = \mathfrak{F}(u_{\rm sph})(t,\theta) 
        = (\cosh t)^{-\gamma_n}.$$ 
    
	    In this setting,
		Theorem~\ref{thmA} classifies all positive solutions 
		$v_{\varepsilon_0} \in \mathcal{C}^{6}(\mathbb{R})$ to \eqref{ourODE} in terms of the necksize 
		$\varepsilon_0 \in(0, \varepsilon^{*}_n]$, where 
		$\varepsilon_0=\min_{\mathbb R}v \in(0, \varepsilon^{*}_n]$.
	    Varying the parameter $\varepsilon$ from its maximal value of $\varepsilon_n^*$ to $0$, 
	    we see that the Delaunay solutions in Theorem~\ref{thmA} (b) interpolate 
	    between the cylindrical solution $v_{\rm cyl}$ and the spherical solution $v_{\rm sph}$. We 
	    denote the minimal period of $v_\varepsilon$ by $T_\varepsilon$. 
        
        \begin{definition}
        For each $\varepsilon \in (0, \varepsilon_n^*]$ the Delaunay metric of 
        necksize $\varepsilon$ is 
        $$g_\varepsilon = v_\varepsilon^{\frac{4}{n-6}} (\ud t^2 + \ud\theta^2) = 
        u_\varepsilon^{\frac{4}{n-6}} \delta,$$ 
        where $u_\varepsilon = \mathfrak{F}^{-1} (v_\varepsilon)$. 
        Observe that we have equivalently defined $g_\varepsilon$ as a metric 
        on $\mathcal{C}_{-\infty}$, using $v_\varepsilon$ as the conformal factor, and
        on $\mathbb{R}^n \backslash \{ 0 \}$, using $u_\varepsilon = \mathfrak{F}^{-1} 
        (v_\varepsilon)$ as the conformal factor.  
        \end{definition}
		
		We can reformulate the expansion \eqref{asymptotics} to read 
        \begin{proposition}
        Let $g \in \mathcal{M}^6_N$ with the singular set $\Lambda$ and let 
        $p \in \Lambda$. Then there exists a Delaunay solution $u_\varepsilon$ 
        such that in stereographic coordinates centered at $p$ the asymptotic 
        expansion 
        $$g = ((1+\mathrm{o}(|x|)) u_{\varepsilon, R} (x))^{\frac{4}{n-6}} \delta, 
        \qquad u_{\varepsilon,R} (x) = u_\varepsilon (Rx).$$ 
        We can restate this asymptotic expansion as 
        \begin{eqnarray*}
        g & = & ((1+\mathrm{o}(|x|)) \mathfrak{F}^{-1}(v_\varepsilon (\cdot + T))(x))^{\frac{4}{n-6}}
        \delta=((1+\mathrm{o}(e^{-t})) v_\varepsilon(t+T))^{\frac{4}{n-6}} (\ud t^2 + \ud\theta^2).
        \end{eqnarray*}
        In other words, any admissible metric is asymptotic to a 
        translated Delaunay metric near a puncture.
        In the formulae above $R$ and $T$ are related by $R = -\ln T$. 
        \end{proposition}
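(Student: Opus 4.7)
The plan is to interpret the proposition as a geometric repackaging of the analytic asymptotic expansion already supplied by Theorem~\ref{thmB}, combined with the scaling law of Remark~\ref{rmk:scalinglaw} and the Emden--Fowler change of variables of Definition~\ref{def:cylindricaltransformation}. The two displayed forms of the expansion are equivalent under $\mathfrak{F}$, so my strategy is to establish the first form in stereographic coordinates and then transport it to the cylinder.

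First, I would fix a puncture $p\in\Lambda$ and choose stereographic coordinates sending $p$ to the origin, so that $g$ pulls back to $u^{4/(n-6)}\delta$ on a punctured neighborhood of $0$, with $u=U\cdot u_{\mathrm{sph}}$ satisfying \eqref{limitequationmanypunct}. Completeness of $g$ forces $u(x)\to\infty$ as $x\to 0$, so the origin is a non-removable singularity and $u$ solves the local PDE \eqref{ourlocalPDE} for some $R>0$. Next I would verify the superharmonicity hypotheses of Theorem~\ref{thmB}, namely $-\Delta u\geqslant 0$ and $\Delta^2 u\geqslant 0$ near the puncture. This uses the factorization $P^6_{\mathrm{rad}}=L_{\lambda_1}\circ L_{\lambda_2}\circ L_{\lambda_3}$ together with a maximum-principle argument of the type given in \cite{arxiv:1901.01678}: since $u$ blows up at the origin, the intermediate second-order equations satisfied by $-\Delta u$ and $\Delta^2 u$ force the correct sign by ruling out bounded sign-changing alternatives.

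With these sign conditions in hand, Theorem~\ref{thmB} yields $u(x)=(1+\mathrm{o}(1))u_{\varepsilon,T}(|x|)$ as $x\to 0$ for some $\varepsilon\in(0,\varepsilon_n^*]$ and translation $T\in(0,T_\varepsilon]$. Writing $u_{\varepsilon,R}(x):=u_\varepsilon(Rx)$ with $T=-\ln R$ (following Definition~\ref{def:cylindricaltransformation}) recovers the first displayed form up to an absorption of constants via the scaling in Remark~\ref{rmk:scalinglaw}. Applying $\mathfrak{F}$ to both sides then delivers the cylindrical restatement, because a direct computation from \eqref{cylindricaltransformation} shows that $\mathfrak{F}(u_{\varepsilon,R})(t,\theta)$ is, up to an absorbed multiplicative constant, exactly $v_\varepsilon(t+T)$. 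The error $\mathrm{o}(1)$ in $x$ corresponds to $\mathrm{o}(1)$ in $t$; a standard bootstrap--linearisation argument on the difference $w:=u/u_{\varepsilon,R}-1$ upgrades this to the sharper decay rate $\mathrm{o}(|x|)=\mathrm{o}(e^{-t})$ stated in the proposition.

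The main obstacle is the bootstrap improvement of the error from $\mathrm{o}(1)$ to $\mathrm{o}(|x|)$. Concretely, one substitutes $u=(1+w)u_{\varepsilon,T}$ into the sixth-order equation, linearises around the Delaunay background to obtain a sixth-order linear equation for $w$ whose coefficients are determined by $u_{\varepsilon,T}$, and exploits the spectral gap of the linearised operator (equivalently, the indicial roots of $P^6_{\mathrm{cyl}}$ linearised at $v_\varepsilon$) to extract the improved decay. Verifying the two sign hypotheses in the preceding step is the other delicate point, but it can be handled by invoking the sixth-order maximum-principle results already present in the GJMS literature cited above.
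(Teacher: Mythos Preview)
The paper does not give a separate proof of this proposition: it is introduced by the sentence ``We can reformulate the expansion \eqref{asymptotics} to read'' and is intended as a direct restatement of Theorem~\ref{thmB} in the various coordinate systems. Your argument is therefore considerably more detailed than what the paper supplies, and in outline it is correct.

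Two points of comparison are worth recording. First, for the sign conditions $-\Delta u\geqslant 0$ and $\Delta^2 u\geqslant 0$, the paper does not argue via the factorization $P^6_{\rm rad}=L_{\lambda_1}\circ L_{\lambda_2}\circ L_{\lambda_3}$ as you propose; instead it invokes Proposition~\ref{prop:super_harm} (the Ng\^{o}--Ye superharmonicity result \cite{MR4438901}), which applies directly to global solutions of \eqref{limitequationmanypunct} and delivers both inequalities at once. This is shorter and avoids the maximum-principle bookkeeping you describe. Second, you correctly flag that Theorem~\ref{thmB} only yields an $\mathrm{o}(1)$ error while the proposition records $\mathrm{o}(|x|)$; the paper does not comment on this discrepancy, and your proposed linearisation/indicial-root bootstrap is the standard mechanism for such an upgrade. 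In that sense your write-up actually fills a gap the paper leaves implicit.
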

        
	\section{Pohozaev invariants}\label{sec:radialpohozaev}
	
		We now turn to a discussion of the existence and specific form of a family of homological integral 
		invariants of solutions of equation \eqref{ourequation}. 
	These homological invariants were discovered in their simplest form by S. Pohozaev \cite{MR0192184}, and 
	generalized by R. Schoen \cite{MR929283} for the Riemannian setting.
	
	As a starting point, we define the energy $\mathcal{H}_{\rm cyl}$ by
    \begin{equation} \label{hamiltonian1}
        \mathcal{H}_{\rm cyl}(v):=\mathcal{H}_{\rm rad}(v)+\mathcal{H}_{\rm ang}(v)+F(v),
    \end{equation}
    where
    \begin{equation*}
        \mathcal{H}_{\rm rad}(v):=\frac{1}{2}{v^{(3)}}^2+\frac{K_4}{2}{v^{(2)}}^{2}+\frac{K_2}{2}{v^{(1)}}^2-\frac{K_0}{2}v^2
        +v^{(5)}v^{(1)}-v^{(4)}v^{(2)}-{K_4}
        v^{(3)}v^{(1)},
    \end{equation*}
    is the radial part,
    \begin{align*}
        \mathcal{H}_{\rm ang}(v)&:=-J_4\left(\partial_t^{(3)}\nabla_\theta v\partial_t\nabla_\theta v-|\partial_t^{(2)}\nabla_\theta v|^2\right)-\frac{J_2}{2}|\partial_t^{(2)}\nabla_\theta v|^2-\frac{J_1}{2}|\partial_t^{(2)}\nabla_\theta v|^2&\\
		&-\frac{J_0}{2}|\nabla_\theta v|^2+\frac{L_2}{2}|\partial_t^{(2)}\Delta_\theta v|^2+\frac{L_0}{2}|\partial_t^{(2)}\Delta_\theta v|^2+\frac{1}{2}|\Delta_{\theta}v|^2.&
    \end{align*}
    is the angular part, and
    \begin{equation*}
        F(v):=\frac{c_n(n-6)}{2n}|v|^{\frac{2n}{n-6}}
    \end{equation*}
    is the nonlinear term.
    
    Evaluating a derivative, one can easily verify $\mathcal{H}_{\rm cyl}(v)$ is 
    constant for any solution $v$ of the PDE \eqref{ourPDEcyl}. 
    We further observe that the 
    last term $F$ in \eqref{hamiltonian1} is homogeneous of degree $\frac{2n}{n-6}$ while 
    the remaining terms are all homogeneous of degree $2$. 
    
	\begin{definition}\label{def:cylidrincalpohozaev}
		Let $v\in \mathcal{C}^6(\mathcal{C}_T)$ be a positive solution to \eqref{ourPDEcyl}.
		We define its cylindrical Pohozaev invariant as
		\begin{equation*}
		\mathcal{P}_{\rm cyl}(v):=\int_{\{ t \} \times \Ss^{n-1}}\mathcal{H}_{\rm cyl}(v) \ud\theta
		\end{equation*}
		for any $t>T$. Observe that this integral does not, in fact, depend on $t$. 
	\end{definition}
	
	In light of the cylindrical transformation from Definition~\ref{def:cylindricaltransformation}, we
	can define this invariant in spherical coordinates
	\begin{definition}\label{def:radialpohozaev}
		Let $u\in \mathcal{C}^6(\mathbb{B}^*_R)$ be a positive solution to \eqref{ourlocalPDE}.
		We define its spherical Pohozaev invariant as
		\begin{equation*}
		\mathcal{P}_{\rm sph}(u):=(\mathcal{P}_{\rm cyl}\circ\mathfrak{F}^{-1})(u) 
		= \int_{\{ t \} \times \Ss^{n-1}} \mathcal{H}_{\rm cyl} (\mathfrak{F}^{-1}(u)) \ud \theta .
		\end{equation*}
	\end{definition}
	
	Finally, in terms of conformal metrics, we have the following definition of an invariant 
	associated with metrics in the moduli space.
    \begin{definition}
        Let $g \in \mathcal{M}^6_N$ and $p_j\in\Lambda$.
        We define its radial $($or dilational$)$  Pohozaev invariant at the puncture $p_j$ as
        follows. Choose stereographic coordinates sending $p_j$ to the origin and write $g = 
        u^{\frac{4}{n-6}} \delta$ in these coordinates. Then define 
		\begin{equation*}
		\mathcal{P}_{\rm rad}(g,p_j):=\mathcal{P}_{\rm sph}(u) = \int_{\{ t \} 
		\times \Ss^{n-1}} \mathcal{H}_{\rm cyl} (\mathfrak{F}^{-1} (u)) \ud \theta.  
		\end{equation*}
    \end{definition}
 
   The most important result of this section states that bounding the radial Pohozaev invariants away from zero is equivalent to bounding the necksizes of the Delaunay asymptotes away from zero. 
    \begin{proposition}\label{prop:necksizexpohozaev}
    Let $g \in \mathcal{M}^6_N$ and $p_j\in\Lambda$.
    Then $\mathcal{P}_{\textrm{rad}}(g,p_j)$ is well-defined, negative  
    and depends only on the necksize $\varepsilon_j$ of the Delaunay 
    asymptote at $p_j\in \Lambda$. 
    Moreover, decreasing $\varepsilon_j$ will increase $\mathcal{P}_{\textrm{rad}}(g,p_j)$ 
    and if $\varepsilon_j \searrow 0$ then $\mathcal{P}_{\textrm{rad}}(g,p_j) 
    \nearrow 0$. 
    \end{proposition}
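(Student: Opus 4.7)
The plan is to reduce all four assertions (well-definedness, dependence only on $\varepsilon_j$, negativity, and monotonicity with the correct limit) to the study of a single-variable function $H(\varepsilon) := \mathcal{H}_{\mathrm{rad}}(v_\varepsilon)$, and then to analyze $H$ on $(0,\varepsilon_n^*]$.

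First I would show that $\mathcal{P}_{\mathrm{rad}}(g,p_j)$ is independent of $t$ by a direct calculation: differentiating $t \mapsto \int_{\{t\}\times\mathbb S^{n-1}}\mathcal H_{\mathrm{cyl}}(v)\,\ud\theta$ on a cylindrical solution $v=\mathfrak F^{-1}(u)$, then using \eqref{ourPDEcyl} and integrating by parts in the angular variable to absorb the boundary terms. To identify this constant value I let $t\to\infty$ toward the puncture: by the asymptotic expansion stated just above this proposition, $v(t,\theta) \to v_{\varepsilon_j}(t+T_j)$, and Schauder estimates applied to \eqref{ourPDEcyl} upgrade the convergence to the derivatives appearing in $\mathcal H_{\mathrm{cyl}}$. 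Since $v_{\varepsilon_j}$ is $\theta$-independent, the angular part $\mathcal H_{\mathrm{ang}}$ vanishes in the limit and
\[
\mathcal P_{\mathrm{rad}}(g,p_j) = |\mathbb S^{n-1}|\,H(\varepsilon_j),
\]
which is manifestly a function of $\varepsilon_j$ alone.

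Next I would compute $H(\varepsilon)$ at the endpoints. By conservation of $\mathcal H_{\mathrm{rad}}$ along the ODE flow, I may evaluate at the minimum $t_0$ of $v_\varepsilon$, where the reflection symmetry forces $v_\varepsilon^{(1)}(t_0)=v_\varepsilon^{(3)}(t_0)=v_\varepsilon^{(5)}(t_0)=0$, giving
\[
H(\varepsilon) = \tfrac{K_4}{2}\bigl(v_\varepsilon^{(2)}(t_0)\bigr)^2 - v_\varepsilon^{(4)}(t_0)\,v_\varepsilon^{(2)}(t_0) - \tfrac{K_0}{2}\varepsilon^2 + \tfrac{c_n(n-6)}{2n}\varepsilon^{\frac{2n}{n-6}}.
\]
At $\varepsilon=\varepsilon_n^*$ (the cylindrical constant solution) all derivatives vanish and the algebraic identity $c_n(\varepsilon_n^*)^{(n+6)/(n-6)}=K_0\varepsilon_n^*$ collapses this to $H(\varepsilon_n^*)=-\tfrac{3K_0(\varepsilon_n^*)^2}{n}<0$. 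In the opposite limit $\varepsilon\to 0^+$, the Delaunay solutions converge on compact sets to the spherical profile $v_{\mathrm{sph}}(t)=(\cosh t)^{-\gamma_n}$, whose derivatives all decay exponentially at $|t|\to\infty$; conservation then forces $\mathcal H_{\mathrm{rad}}(v_{\mathrm{sph}})\equiv 0$, and continuity of the one-parameter family in $\varepsilon$ yields $H(\varepsilon)\nearrow 0$ as $\varepsilon\searrow 0$.

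The remaining strict monotonicity of $H$ on $(0,\varepsilon_n^*]$ is the main technical obstacle, and this is the step I expect to be hardest. My plan is to differentiate along the Delaunay family — whose smooth parametrization in $\varepsilon$ is supplied by the shooting construction of \cite{arXiv:2210.04376} — and to express $H'(\varepsilon)$ as a boundary pairing between $v_\varepsilon$ and $w:=\partial_\varepsilon v_\varepsilon$ at the minimum $t_0$, via the Noether identity associated with the conserved Hamiltonian. The normalization $w(t_0)=1$ and $w^{(1)}(t_0)=w^{(3)}(t_0)=w^{(5)}(t_0)=0$, combined with sign information on $w^{(2)}(t_0),\,w^{(4)}(t_0)$ extracted from the shooting equations, should give $H'(\varepsilon)<0$. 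The real difficulty is that $v_\varepsilon^{(2)}(t_0)$ and $v_\varepsilon^{(4)}(t_0)$ are coupled (they appear together in the cross term $v^{(4)}v^{(2)}$), and the ODE at $t_0$ does not decouple them, so their simultaneous $\varepsilon$-dependence must be tracked quantitatively. Once this monotonicity is in hand, the two endpoint values from the previous paragraph force $H<0$ throughout $(0,\varepsilon_n^*]$ and complete the proof.
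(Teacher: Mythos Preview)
Your reduction to the one-variable quantity $H(\varepsilon)=\mathcal{H}_{\mathrm{rad}}(v_\varepsilon)$ via the $t$-independence of the cylindrical Pohozaev integral and the Delaunay asymptotics of Theorem~\ref{thmB} is exactly what the paper does; the first two paragraphs of your proposal match the paper's argument essentially line for line.

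The divergence is in the final step. The paper does not attempt to prove negativity and monotonicity of $H$ directly: it simply quotes the energy ordering of the Delaunay family from \cite[Lemma~4.14]{arXiv:2210.04376}, where this is established as part of the ODE classification. Your plan instead tries to reprove that lemma from scratch by differentiating in $\varepsilon$ and reading off signs at the minimum point. The endpoint computations you give are fine, but the core step --- controlling the signs of $v_\varepsilon^{(2)}(t_0)$, $v_\varepsilon^{(4)}(t_0)$ and their $\varepsilon$-derivatives simultaneously --- is precisely the hard part of the cited lemma, and you correctly flag that your argument does not close it. The shooting construction in \cite{arXiv:2210.04376} does supply this sign information, but extracting it is nontrivial and is not something you can expect to fall out of the Noether identity alone; the cross term $-v^{(4)}v^{(2)}$ genuinely obstructs a naive sign argument. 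So your proposal is not wrong, but it is incomplete at exactly the point where the paper outsources the work, and completing it would amount to reproducing a substantial portion of the companion paper.
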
 
    
    \begin{proof} 
    By construction, the integral defining $\mathcal{P}_{\rm rad}(g,p_j)$ does 
    not depend on which sphere $\{ t \} \times \Ss^{n-1}$ we choose, so long 
    as $t$ is sufficiently large, and therefore $\mathcal{P}_{\rm rad}$ is 
    well-defined. By the asymptotics in Theorem \ref{thmB} we know that the 
    conformal factor is asymptotic to a Delaunay solution $u_\varepsilon$, and 
    so letting $t \rightarrow \infty$ we see 
    $$\mathcal{P}_{\rm rad} (g,p_j) = \lim_{t \rightarrow \infty} 
    \int_{\{ t \} \times \Ss^{n-1}} \mathcal{H}_{\rm cyl} (\mathfrak{F}^{-1} (u))
    \ud \theta = \lim_{t \rightarrow \infty} \int_{\{ t \} \times \Ss^{n-1}} 
    \mathcal{H}_{\rm cyl} (v_\varepsilon) \ud \theta 
    < 0.$$ The remaining properties follow directly from energy ordering of the 
    Delaunay solutions as described in 
   \cite[Lemma~4.14]{arXiv:2210.04376}. 
    \end{proof}

     \begin{remark}
        One often finds integral invariants in geometric variational problems. 
        For more details on a class of general higher order conformally invariant locally conserved tensors, we cite \cite{MR3073449}.
        These invariants arise from the conformal invariance of \eqref{ourequation}, by Noether's famous conservation theorem.
    \end{remark}
    
    For our later applications we will need a slight refinement of 
    Proposition \ref{prop:necksizexpohozaev}. 
    \begin{proposition} \label{prop:rescaled_necksize_poho}
    Let $v\in \mathcal{C}^6(\mathcal{C}_T)$ be a positive solution to the following rescaled equation
    $$-P_{\rm cyl}^6v = A v^{\frac{n+6}{n-6}}$$ 
    for some constant $A$ and let 
    $$\mathcal{H}_{\rm cyl}^A (v) = \mathcal{H}_{\rm rad}(v) + \mathcal{H}_{\rm ang}(v) 
    + \frac{(n-6)A}{2n} |v|^{\frac{2n}{n-6}}.$$ 
    Then 
    $$\int_{\{ t \} \times \Ss^{n-1}} \mathcal{H}_{\rm cyl}^A (v) \ud \theta$$ 
    is independent of $t$. 
    \end{proposition}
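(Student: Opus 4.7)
The plan is to show $\frac{d}{dt} \int_{\{t\} \times \mathbb{S}^{n-1}} \mathcal{H}_{\rm cyl}^A(v)\, \ud\theta = 0$ by differentiating under the integral, integrating by parts on $\mathbb{S}^{n-1}$, and substituting the equation. The argument mirrors the computation that was sketched immediately after \eqref{hamiltonian1} for solutions of \eqref{ourPDEcyl}; the only change is that the coefficient $c_n$ of the nonlinearity is replaced everywhere by $A$, and the potential $F(v) = \frac{c_n(n-6)}{2n}|v|^{2n/(n-6)}$ is replaced by its $A$-analog, whose $v$-derivative is exactly $A|v|^{12/(n-6)}v$.

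More precisely, I would first pull the $t$-derivative inside the integral and split $\frac{d}{dt}\mathcal{H}_{\rm cyl}^A(v)$ into its radial, angular, and nonlinear pieces. For the radial piece, the choice of coefficients $K_0, K_2, K_4$ is exactly engineered so that
\begin{equation*}
\frac{d}{dt}\mathcal{H}_{\rm rad}(v) = -(\partial_t v)\, P_{\rm rad}^6 v
\end{equation*}
along a $t$-dependent family of functions on $\mathbb{S}^{n-1}$. For the angular piece, each quadratic term of the form $(\partial_t^{(j)}\nabla_\theta v) \cdot (\partial_t^{(k)}\nabla_\theta v)$ or involving $\Delta_\theta v$ differentiates in $t$ to a sum of terms that, after integrating by parts on the closed manifold $\mathbb{S}^{n-1}$ (so boundary contributions vanish), reassemble into $-(\partial_t v)\, P_{\rm ang}^6 v$ plus a $\theta$-divergence that integrates to zero. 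Finally, the nonlinear piece gives $\frac{d}{dt}\!\bigl(\tfrac{(n-6)A}{2n}|v|^{2n/(n-6)}\bigr) = A v^{(n+6)/(n-6)}\, \partial_t v$.

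Adding these contributions and integrating over $\mathbb{S}^{n-1}$ gives
\begin{equation*}
\frac{d}{dt}\int_{\{t\}\times \mathbb{S}^{n-1}} \mathcal{H}_{\rm cyl}^A(v)\, \ud\theta
= \int_{\{t\}\times\mathbb{S}^{n-1}} (\partial_t v)\Bigl( -P_{\rm cyl}^6 v - A v^{(n+6)/(n-6)}\Bigr)\, \ud\theta,
\end{equation*}
which vanishes by the rescaled PDE. Hence the integral is independent of $t$.

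The only genuinely delicate point is the integration-by-parts bookkeeping in the angular part, since $\mathcal{H}_{\rm ang}$ contains nine terms with coefficients $J_0, J_1, J_2, J_3, J_4, L_0, L_2$ and various mixed derivatives. However, this identical computation underlies the assertion made right after \eqref{hamiltonian1} that $\mathcal{H}_{\rm cyl}(v)$ is conserved for solutions of \eqref{ourPDEcyl}, and the only way it enters the equation is through the combination $-P_{\rm cyl}^6 v = ({\rm nonlinearity})$. Since the rescaling $c_n \mapsto A$ on the right-hand side is matched by the identical rescaling in the definition of $\mathcal{H}_{\rm cyl}^A$, the same cancellation still goes through without any further modification, and no new structural difficulty appears.
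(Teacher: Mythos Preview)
Your proposal is correct and follows exactly the same approach as the paper: differentiate in $t$, integrate by parts on $\mathbb{S}^{n-1}$, and use the equation to cancel. In fact, the paper's proof is just a one-line reference to this computation, so your write-up supplies more detail than the original.
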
 
    
    \begin{proof} The proposition follows from taking the derivative with respect to 
    $t$ and integrating by parts. 
    \end{proof} 
	
	\section{Uniform estimates}\label{sec:estimates}
	This section is devoted to proving uniform upper and lower estimates near 
	the singular set for positive singular solutions to \eqref{ourequation}.
    
    We begin by quoting a superharmonicity result of Ng\^{o} and Ye \cite{MR4438901}. 
    We also remark a similar superharmonicity result for a related integral equation Ao et al. \cite{MR4420104}.
    
    \begin{propositionletter}\label{prop:super_harm}
    Let $u\in \mathcal{C}^{\infty}(\mathbb{R}^n \setminus \Gamma)$ be a positive solution to \eqref{limitequationmanypunct}. 
    Then additionally 
    $-\Delta u \geqslant 0$ and $\Delta^2 u \geqslant 0$ in $\mathbb{R}^n \setminus \Gamma$. 
    \end{propositionletter}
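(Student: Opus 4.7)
The plan is a cascading superharmonicity argument along the triangular system obtained from the equation: setting $u_1=u$, $u_2=-\Delta u$, and $u_3=\Delta^2 u$, the PDE is equivalent to $-\Delta u_1=u_2$, $-\Delta u_2=u_3$, and $-\Delta u_3=c_n u_1^{(n+6)/(n-6)}\geqslant 0$ on $\mathbb{R}^n\setminus\Gamma$. The goal is to prove $u_3\geqslant 0$ first and then deduce $u_2\geqslant 0$ by applying the same mechanism one level up.

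Since $-\Delta u_3\geqslant 0$, the function $u_3$ is superharmonic on $\mathbb{R}^n\setminus\Gamma$. To convert this into pointwise nonnegativity I would work with the spherical averages $\bar u_3(r):=|\partial B_r(p_i)|^{-1}\int_{\partial B_r(p_i)} u_3\, d\sigma$ around each puncture $p_i\in\Gamma$. A first integration of the radial form of the equation shows that $r\mapsto r^{n-1}\bar u_3'(r)$ is nonincreasing, and a second integration yields a B\^{o}cher-type expansion
\[ \bar u_3(r)=\frac{L_i}{n-2}\, r^{2-n}+B_i+\mathrm{o}(1)\qquad(r\to 0^+), \]
where $L_i$ is determined by the flux of $\nabla u_3$ through a surrounding sphere plus the positive bulk contribution from the right-hand side. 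Establishing $L_i\geqslant 0$, combined with an analogous control at infinity via the Kelvin transform $u\mapsto |x|^{6-n}u(x/|x|^2)$ (which preserves the PDE and converts the point at infinity into an isolated or removable singularity of the reflected positive solution), then forces $u_3\geqslant 0$ pointwise by the minimum principle on $\mathbb{R}^n\setminus\Gamma$. With $u_3\geqslant 0$ in hand, $u_2$ satisfies $-\Delta u_2=u_3\geqslant 0$, so $u_2$ is itself superharmonic, and the identical spherical-average analysis — now with the nonnegative $u_3$ playing the source role — produces the same B\^{o}cher-type expansion for $\bar u_2$ with a nonnegative leading coefficient, and hence $u_2\geqslant 0$, completing the cascade.

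\textbf{The main obstacle} is establishing the nonnegativity of the leading coefficient $L_i$ (and of its analogue for $u_2$): this rests on an integration-by-parts identity that ties the boundary flux on a fixed sphere $\partial B_\rho(p_i)$ to the nonnegative bulk integral on the surrounding annulus, requiring a judicious choice of $\rho$ combined with the strict superharmonicity of $u_3$ on the punctured ball. Handling the endpoint at infinity introduces a parallel difficulty, since the Kelvin weight $|x|^{6-n}$ must be reconciled with the critical exponent, and local integrability of $u^{(n+6)/(n-6)}$ near each puncture (which uses $n\geqslant 7$ via the inequality $(n+6)/2<n$) must be secured \emph{a priori}, i.e., without already invoking the Delaunay-type asymptotics of Theorem~\ref{thmB}. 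This last point can be sidestepped by first running the argument with truncations $u\wedge M$ and passing to the limit $M\to\infty$ in the integrated identities, using the monotone structure of the cascade.
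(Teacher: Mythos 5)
Your proposal takes a genuinely different route from the paper: the paper simply cites Ng\^o--Ye \cite[Proposition~1.5]{MR4438901}, which already establishes the super-polyharmonicity of positive solutions to higher-order Hardy--H\'enon type equations in the punctured setting, and then invokes elliptic regularity to pass from weak to pointwise inequalities. Your attempt to prove the result from scratch via spherical averages, B\^ocher expansions and a minimum principle is a reasonable plan of attack, but as written it has several genuine gaps, the most serious of which you have already flagged without resolving.

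Concretely: (i) The B\^ocher-type expansion for a superharmonic function on a punctured ball requires the function to be bounded below near the puncture; without that hypothesis a superharmonic function can tend to $-\infty$ and the decomposition $aG + h$ fails. Since $\Delta^2 u \geqslant 0$ is essentially what you are trying to prove, invoking this expansion for $u_3$ at the outset is circular unless you supply an independent a priori lower bound. (ii) Even granting the expansion, you nowhere establish $L_i\geqslant 0$; you label it ``the main obstacle'' and describe only the shape of the desired integration-by-parts identity. In the standard super-polyharmonicity arguments (Wei--Xu, Lin, Chen--Li, and the Ng\^o--Ye reference the paper cites) this is precisely where the argument is a contradiction, not a direct flux estimate: one assumes $(-\Delta)^j u<0$ somewhere and cascades the averaged inequalities through the triangular system to contradict the positivity or the a priori growth of $u$. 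The structure of that contradiction argument is what your proposal lacks. (iii) Even with $L_i\geqslant 0$, if $L_i=0$ the constant $B_i$ in your expansion is not sign-controlled, so the ``$\liminf\geqslant 0$ at the boundary'' hypothesis of the minimum principle is not secured. (iv) The truncation $u\wedge M$ does not satisfy the PDE (and $-\Delta(u\wedge M)$ picks up a distributional defect on $\{u=M\}$), so ``running the argument with truncations'' needs substantially more justification than you give, and it is not clear it repairs the a priori integrability issue. Your plan identifies the right objects and the right tension, but the part you defer is exactly the substance of the proposition.
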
 
    
    \begin{proof} Following  \cite[Proposition 1.5]{MR4438901} we see 
    that $u$ is both weakly superharmonic and weakly 
    superbiharmonic in $\mathbb{R}^n$. 
    In other words, for a
    smooth test function $\phi$ compactly supported in $\mathbb{R}^n 
    \setminus \Gamma$, we have 
    \begin{equation*}
        \int_{\mathbb{R}^n} u(-\Delta) \phi \ud x\geqslant 0 \quad {\rm and} \quad \int_{\mathbb{R}^n} u(-\Delta)^2 \phi \ud x \geqslant 0.
    \end{equation*}
    Standard elliptic regularity then implies $u$ is superharmonic 
    and superbiharmonic where it is smooth, namely in $\mathbb{R}^n 
    \setminus \Gamma$. 
    \end{proof} 
     
    The first step is a sixth order version of the convexity result \cite[Proposition~1]{schoen-convexity}, which 
    is proved using the Alexandrov's moving  planes (see also \cite[Theorem~4.1]{chang-han-yang} for a 
    fourth order version). 
	
	\begin{lemma}\label{lm:geodesicallyconvex}
	Let $g = U^{{4}/{(n-6)}} g_0$ be a complete metric on $\Omega = \mathbb{S}^n \backslash \Lambda$ 
	which is conformal to the round metric, such that $Q_g^6$ is a positive constant. 
	Then the boundary of any $($spherically$)$ round ball in $\Omega$ has a non-negative 
    definite second fundamental form with respect to $g$. 
	\end{lemma}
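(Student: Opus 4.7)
The strategy is to reduce to a monotonicity property of a conformal factor on a Euclidean half-space, which is then established via the Alexandrov moving planes method adapted to the tri-Laplacian, in the spirit of Schoen's and Chang--Han--Yang's arguments in the second and fourth order cases. Fix a round ball $B\subset\Omega$ with $\overline{B}\cap\Lambda=\varnothing$ and pick $p_0\in\partial B$, which is a regular point of $g$. Composing stereographic projection from $p_0$ with a Euclidean isometry produces a M\"obius transformation $\Phi\colon\mathbb S^n\setminus\{p_0\}\to\mathbb R^n$ that sends $\partial B$ to the hyperplane $\Sigma_0:=\{x_n=0\}$, $B$ to the upper half-space $H^+:=\{x_n>0\}$, and $\Lambda$ into $H^-:=\{x_n<0\}$. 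In these coordinates write $g=u^{4/(n-6)}\delta$; then $u$ is positive and smooth on $\mathbb R^n\setminus\Phi(\Lambda)$, solves \eqref{limitequationmanypunct}, blows up at every point of $\Phi(\Lambda)$, and decays like $|x|^{6-n}$ at infinity (since $p_0$ is regular). Proposition~\ref{prop:super_harm} additionally gives $-\Delta u\geqslant 0$ and $\Delta^2 u\geqslant 0$. The conformal transformation rule for the second fundamental form, combined with the vanishing of the Euclidean second fundamental form of $\Sigma_0$, yields
\[
h^g_{ij}(x',0)=\tfrac{2}{n-6}\,u^{(8-n)/(n-6)}(x',0)\,\partial_{\nu}u(x',0)\,\delta_{ij},
\]
where $\nu=-\mathbf{e}_n$ is the Euclidean outward unit normal to $B$. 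Consequently the lemma is equivalent to the monotonicity statement $\partial_{x_n}u(x',0)\leqslant 0$ for every $x'\in\mathbb R^{n-1}$.

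To establish this, apply moving planes in the direction $\mathbf{e}_n$. For $\lambda>0$ set $x^\lambda:=(x',2\lambda-x_n)$, $u_\lambda(x):=u(x^\lambda)$, $w_\lambda:=u-u_\lambda$, and $\Sigma_\lambda^+:=\{x_n>\lambda\}$. Using $v_2:=-\Delta u\geqslant 0$ and $v_3:=\Delta^2 u\geqslant 0$, the tri-Laplacian factors into the cooperative second-order system
\[
-\Delta u=v_2,\qquad -\Delta v_2=v_3,\qquad -\Delta v_3=c_n u^{(n+6)/(n-6)}.
\]
Subtracting the analogous system for $u_\lambda$ and invoking the mean value theorem yields, on $\Sigma_\lambda^+$,
\[
-\Delta w_\lambda=W_2,\qquad -\Delta W_2=W_3,\qquad -\Delta W_3=c_\lambda(x)\,w_\lambda,\qquad c_\lambda\geqslant 0,
\]
where $W_2:=v_2-v_2\circ(\cdot)^\lambda$ and $W_3:=v_3-v_3\circ(\cdot)^\lambda$. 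Because $\Phi(\Lambda)\subset H^-$, the reflected function $u_\lambda$ blows up on the finite set $\Phi(\Lambda)^\lambda\subset\Sigma_\lambda^+$, while $u$ remains smooth there and decays at infinity; a standard initialization (for instance via Kelvin transform to control the asymptotics at infinity) therefore forces $w_\lambda\leqslant 0$ on $\Sigma_\lambda^+$ for $\lambda$ sufficiently large. A chain of maximum principles applied to the three second-order inequalities propagates the sign of $w_\lambda$ through $W_3$ and $W_2$, and the usual continuity/sliding argument then lets $\lambda$ decrease down to $0$ while preserving $w_\lambda\leqslant 0$ on $\Sigma_\lambda^+$.

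Setting $\lambda=0$ gives $u\leqslant u_0$ in $H^+$ with equality on $\Sigma_0$, so $\partial_{x_n}(u-u_0)(x',0)\leqslant 0$; since $\partial_{x_n}u_0(x',0)=-\partial_{x_n}u(x',0)$, this is precisely $\partial_{x_n}u(x',0)\leqslant 0$, completing the proof. The main obstacle is the moving planes step for the sixth-order operator: classical Alexandrov reflection is naturally a second-order tool, so the decisive idea is to replace the scalar equation $(-\Delta)^3u=c_n u^{(n+6)/(n-6)}$ by the cooperative system above and to exploit \emph{both} superharmonicity conditions of Proposition~\ref{prop:super_harm} in order to propagate the sign of $w_\lambda$ through all three comparisons before one can slide in $\lambda$; the conformal reduction and the computation of the second fundamental form, by contrast, are entirely standard.
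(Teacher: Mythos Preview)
Your proposal is correct and follows essentially the same route as the paper: both reduce via stereographic projection to a monotonicity statement for the normal derivative of $u$ on a hyperplane, rewrite \eqref{limitequationmanypunct} as the cooperative second-order system in $(u,-\Delta u,\Delta^2 u)$ using Proposition~\ref{prop:super_harm}, and then run Alexandrov moving planes on the triple. The only cosmetic differences are the orientation of the half-space (hence the direction in which $\lambda$ slides and which side carries the singularities) and the level of detail in the initialization and continuation steps, where the paper invokes the asymptotic expansions \eqref{asymp1}--\eqref{asymp3}, \cite[Lemma~2.3]{MR982351}, and the extended maximum principle explicitly.
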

	
	\begin{proof} We let $\mathcal{B}$ be a geodesic ball with respect to the 
	round metric such that $\overline{\mathcal{B}} \subset \Omega$ and 
	choose a stereographic projection that sends $\mathcal{B}$ to the 
	half-space $\{ x\in \mathbb{R}^n : x_1 < 0 \}$. As before, we denote 
	the image of the singular set $\Lambda$ under this stereographic 
	projection by $\Gamma$. With respect to these stereographic coordinates 
	the metric takes the form $g = u^{{4}/{(n-6)}} \delta$ where $u\in \mathcal{C}^{\infty}(\mathbb{R}^n \setminus \Gamma)$ 
	satisfies \eqref{limitequationmanypunct}, namely $u:\mathbb{R}^n \setminus \Gamma \rightarrow (0,\infty)$ satisfy
    \begin{equation*}
        (-\Delta)^3 u = c_n u^{\frac{n+6}{n-6}} \quad {\rm in} \quad \mathbb{R}^n \setminus \Gamma.
    \end{equation*}
    Furthermore, the boundary of our round ball is 
    $\partial \mathcal{B} = \{ x \in \mathbb{R}^n : x_1 = 0\}$ 
    and is oriented by the inward unit normal $\eta_g = u^{-2/(n-6)} 
    \partial_{x_1}$. It follows that the second fundamental form $II$ and 
    mean curvature $H$ of $\partial \mathcal{B}$ are given by 
    $$II_{ij} = -\langle \nabla_{\partial x_j} \eta, \partial_{x_i} \rangle 
    = \frac{2}{n-6} \delta_{ij} u^{\frac{8-n}{n-6}} \partial_{x_1} u, \quad 
    H = \frac{2n}{n-6} u^{\frac{8-n}{n-6}} \partial_{x_1} u.$$ 
    Therefore, (weak) convexity of $\partial \mathcal{B}$ follows once we show 
    $\partial_{x_1} u \geqslant 0$ along the hyperplane $\{ x_1 = 0\}$. 
    
    By Proposition \ref{prop:super_harm}, we have 
    \begin{equation*}
        -\Delta u  \geqslant 0 \quad {\rm and} \quad (-\Delta)^2 u \geqslant 0 \quad {\rm in} \quad \mathbb R^n\setminus\Gamma.
    \end{equation*}
    We now rewrite \eqref{limitequationmanypunct} as a second order system, 
    letting 
    \begin{equation*}
        \mathrm u_0 = u, \quad \mathrm u_1 = -\Delta u \quad {\rm and} \quad \mathrm u_2 = (-\Delta)^2 u.
    \end{equation*}
    so that we obtain $\mathrm u_i : \mathbb{R}^n \setminus \Gamma \rightarrow (0,\infty)$ for $i=0,1,2$ satisfy
    \begin{equation} \label{second_order_system} 
    \left \{ \begin{array}{rcl} -\Delta \mathrm u_0 & = & \mathrm u_1 \geqslant 0 \\ 
    -\Delta \mathrm u_1 & = & \mathrm u_2 \geqslant 0\\ -\Delta \mathrm u_2 & = & c_n u_0^{\frac{n+6}{n-6}} \geqslant 0 . 
    \end{array} \right. 
    \end{equation} 
   
    It follows from \cite[Theorem~2.7]{MR931204} that the Newtonian capacity of the singular set vanishes, 
    {\it i.e.} $\operatorname{cap}(\Gamma)=0$.
    As a consequence, one can find $a_0>0$ and $a_j\in\mathbb{R}$ for $j=1,\dots,n$ such that
     \begin{align}\label{asymp1}
        \begin{cases}
            \mathrm u_0(x) &=a_0|x|^{6-n}+\sum_{j=1}^n {a_j x_j}{|x|^{4-n}}+\mathcal{O}\left({|x|^{4-n}}\right)\\
            \mathrm \partial_{x_i}u_0(x) &=-(n-6) a_0 x_i|x|^{4-n}+\mathcal{O}\left(|x|^{4-n}\right)\\
            \mathrm \partial^2_{x_i x_j} u_0(x) &=\mathcal{O}\left({|x|^{4-n}}\right),
        \end{cases}
    \end{align}
    which, by differentiating further, yields 
    \begin{align}\label{asymp2}
        \begin{cases}
            \mathrm u_1(x) &=b_0|x|^{4-n}+\sum_{j=1}^n {b_j x_j}{|x|^{2-n}}+\mathcal{O}\left({|x|^{2-n}}\right) \\
            \partial_{x_i}  \mathrm u_1(x)&=-(n-4) b_0 x_i|x|^{-n}+\mathcal{O}\left({|x|^{2-n}}\right) \\
            \partial^{(2)}_{x_i x_j} \mathrm u_{1}(x) &=\mathcal{O}\left({|x|^{2-n}}\right)
        \end{cases}    
    \end{align}
    and 
    \begin{align}\label{asymp3}
        \begin{cases}
            \mathrm u_2(x) &=c_0|x|^{2-n}+\sum_{j=1}^n {c_j x_j}{|x|^{-n}}+\mathcal{O}\left({|x|^{-n}}\right) \\
            \partial_{x_i} \mathrm u_2(x)&=-(n-2) c_0 x_i|x|^{-n}+\mathcal{O}\left({|x|^{2-n}}\right) \\
            \partial^{(2)}_{x_i x_j} \mathrm u_2 (x) &=\mathcal{O}\left({|x|^{-n}}\right)
        \end{cases}    
    \end{align}
    as $|x|\rightarrow0$, where $b_0, c_0 > 0$ and $b_j, c_j \in \mathbb{R}$ for $j=1,\dots, n$. 
   
    We are now ready to set up the method of moving planes applied to the triple of 
    functions $(\mathrm u_0, \mathrm u_1, \mathrm u_2)$. 
    For any $\lambda \in \mathbb{R}$, we let $\Sigma_\lambda = \{ x \in \mathbb{R}^n : x_1 > \lambda\}$ and 
    $T_\lambda = \partial \Sigma_\lambda = \{ x \in \mathbb{R}^n : x_1 = \lambda\}$. We also set
    $\Sigma_\lambda^{\prime} = \Sigma_\lambda \setminus \Gamma$. For any $x \in \Sigma_\lambda^{\prime}$,
    we let 
    $$x^{\lambda} = (2\lambda-x_1, x_2, \dots, x_n)$$ 
    be the reflection of $x$ across the hyperplane $T_\lambda = \{ x_1 = \lambda\}$. Finally, 
    our goal in moving planes is to show that for any $\lambda \leqslant 0$ and $i=0,1,2$, we have 
    \begin{equation} \label{goal_moving_planes} 
    \mathrm w_i^\lambda (x)>0 \quad {\rm for} \quad i=0,1,2,
    \end{equation}
    where $\mathrm w_i^\lambda : \Sigma_\lambda^{\prime} \rightarrow \mathbb{R}$ is given by
    \begin{equation*}
    \mathrm w_i^\lambda (x) 
    = \mathrm u_i(x) - \mathrm u_i(x^\lambda). 
    \end{equation*}
    Once we establish \eqref{goal_moving_planes}, letting $\lambda \nearrow 0$ 
    the first inequality implies $\mathrm u_{x_1} \geqslant 0$ on $T_{0} = \partial \mathcal{B}$, 
    completing our proof. 
   
    Observe that the expansion \eqref{asymp3} implies $\mathrm u_2$ is not identically zero. Thus, using 
    the strong maximum principle and the last equation in \eqref{second_order_system}, we see 
    that $\mathrm u_2> 0$ on $\mathbb{R}^n \backslash \Gamma$. Working backwards, the inequality $\mathrm u_2> 0$ and 
    the same reasoning implies $\mathrm u_1 > 0$ on $\mathbb{R}^n \backslash \Gamma$, which then in turn 
    gives us $\mathrm u_0>0$ on $\mathbb{R}^n \backslash \Gamma$. 
    
    The singular set $\Gamma$ is compact, so there exists $R_0> 0$ such that $\Gamma 
    \subset \mathbb{B}_{R_0} (0)$. We use the extended maximum principle \cite[Theorem~3.4]{MR0350027}
    to conclude there exists $\delta> 0$, depending on $R>R_0$, such that 
    \begin{equation} \label{apriori_bound1}
    \left. \mathrm u_0 \right |_{\mathbb{B}_R(0) \backslash \Gamma} \geqslant \delta, \quad 
    \left. \mathrm u_1 \right |_{\mathbb{B}_R(0) \backslash \Gamma} \geqslant \delta, \quad {\rm and} \quad
    \left. \mathrm u_2 \right |_{\mathbb{B}_R(0)\backslash \Gamma} \geqslant \delta. 
    \end{equation} 
  
    Combining our expansion \eqref{asymp3} with Lemma 2.3 of \cite{MR982351} there 
    exists $R_1>0$ and $\lambda_1\leqslant \lambda_0$ such that for each $\lambda < \lambda_1$ we have 
    $$\mathrm w_0^\lambda (x) > 0, \quad \mathrm w_1^\lambda(x) > 0, \quad {\rm and} \quad \mathrm w_2^\lambda (x) > 0 \quad
    \textrm{ for } \quad  x \in \Sigma_\lambda \quad \textrm{ and }\quad |x|> R. $$ 
    Using this inequality together with \eqref{apriori_bound1} then implies that there exists
    $\lambda_2 \leqslant \lambda_1$ such that 
    $$\mathrm w_0^\lambda(x) > 0, \quad \mathrm w_1^\lambda (x) > 0, \quad {\rm and} \quad \mathrm w_2^\lambda(x) > 0 \quad  
    \textrm{ on } \quad  \Sigma_\lambda^{\prime}  \quad
    {\rm for \ each } \quad \lambda \leqslant \lambda_2.$$   
    By construction 
    \begin{equation} \label{meanconvex4} 
    \Delta \mathrm w_2^\lambda (x) = c_n \left(\mathrm u_0(x^\lambda)^{\frac{n+6}{n-6}}
    - \mathrm u_0(x)^{\frac{n+6}{n-6}}\right) < 0 \quad \textrm{ on } \quad \Sigma_\lambda^{\prime} 
    \quad {\rm for \ each} \quad \lambda \leqslant \lambda_2.
    \end{equation} 
    On the other hand, the 
    asymptotic expansion \eqref{asymp3} implies 
    \begin{equation} \label{meanconvex5}
    \mathrm w_2^\lambda (x) \rightarrow 0 \quad \textrm{ as } \quad |x| \rightarrow \infty.
    \end{equation}
    Putting together \eqref{meanconvex4}, \eqref{meanconvex5} and 
    $\left. \mathrm w_2^\lambda \right |_{T_\lambda} = 0$, we see by the 
    maximum principle that $\mathrm w_2^\lambda (x) \geqslant 0$ for each $x \in 
    \Sigma_\lambda^{\prime}$ and $\lambda \leqslant \lambda_2$. 
    However, by the completeness of the metric $g$ on $\Omega$ we know that $\mathrm w_2^\lambda$ is not 
    identically zero on $\Sigma_\lambda^{\prime}$, so again the
    maximum principle actually implies $\mathrm w_2^\lambda (x) > 0$ 
    for each $x \in \Sigma_\lambda^{\prime}$ and $\lambda \leqslant 
    \lambda_2$. Once again, analogous arguments imply $\mathrm w_1^\lambda >0$ 
    and $\mathrm w_0^\lambda > 0$ on $\Sigma_\lambda^{\prime}$ for each 
    $\lambda \leqslant \lambda_2$. 
    
    At this point we define 
    $$\lambda^* = \sup \{ \lambda \leqslant 0 : \mathrm w_i^\mu(x) > 0
    \textrm{ for each } \mu \leqslant \lambda \textrm{ and }i=0,1,2 \} $$ 
    and prove that $\lambda^* = 0$. Following our definitions we have 
    $$\Delta \mathrm w_0^\lambda (x) = -\Delta \mathrm u_0(x) + \Delta \mathrm u_0(x^\lambda) <0$$ 
    for each $x \in \Sigma_\lambda^{\prime}$ and $\lambda < \lambda^*$, and 
    so $\Delta \mathrm w_0^{\lambda^*} \leqslant 0$ on $\Sigma_{\lambda^*}^{\prime}$. By 
    similar arguments we also have 
    $$\Delta \mathrm w_1^{\lambda^*} \leqslant 0, \quad \Delta \mathrm w_2^{\lambda^*} 
    \leqslant 0 \textrm{ on }\Sigma_{\lambda^*}^{\prime}.$$ 
    Now suppose $\lambda^* < 0$ and let $x^* \in \overline{\Sigma_{\lambda^*}^{\prime}}$ 
    such that $\mathrm w_i^{\lambda^*} (x^*) = 0$ for some $i=0,1,2$. If $x^{\lambda^*} 
    \in \Sigma_{\lambda^*}^{\prime}$ is an interior point then the maximum principle 
    implies $\mathrm w_i^{\lambda^*} \equiv 0$, which in turn means $\mathrm u_i$ is 
    symmetric about the hyperplane $T_{\lambda^*}$. This is impossible because the 
    singular set $\Gamma$ lies to one side of $T_{\lambda^*}$. On the other 
    hand, if $x^* \in T_{\lambda^*}$ then by the Hopf boundary lemma (together with the 
    fact that $\mathrm w_i^{\lambda^*}$ may not be constant in $\Sigma_{\lambda^*}^{\prime}$) 
    we have 
    \begin{equation}\label{boundary1} 
    0 < \partial_{x_1} \mathrm w_i^{\lambda^*} (x^*) = 2 \partial_{x_1}
    \mathrm u_i (x^*).\end{equation} 
    However, the asymptotic expansions \eqref{asymp1}, \eqref{asymp2} 
    and \eqref{asymp3} combined with 
    $\lambda^* < 0$ tells us 
    \begin{equation} \label{boundary2} 
    \mathrm u_i (x) - \mathrm u_i (x^{\lambda^*}) \geq \delta_3 \textrm{ for }
    |x| > R_2 \textrm { and } x_1 = \lambda^*
    \end{equation} 
    for some positive numbers $\delta_3$ and $R_2$. Combining \eqref{boundary1} 
    and \eqref{boundary2} implies the inequality $\mathrm w_i^\lambda$ 
    continues to hold for some small value $\lambda < \lambda^*$, contradicting 
    the definition of $\lambda^*$. 
	\end{proof}
	
	First, we prove the upper bound estimate.
	Our proof borrows from Pollack's proof of the corresponding upper bound in the scalar curvature case. 
	\begin{proposition}\label{prop:upperestimate}
		Let $u\in \mathcal{C}^{\infty}(\Omega)$ be a positive singular solution to \eqref{ourequation}.
		There exists $C_1>0$ depending only on $n$ and $d$ satisfying
		\begin{equation*}
		    u(x) \leqslant C_1\ud_{g_0}(x, \Lambda)^{-\gamma_n}.
		\end{equation*}
	\end{proposition}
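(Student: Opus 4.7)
The plan is to argue by contradiction in the spirit of Pollack's scalar-curvature estimate. For a fixed solution $u$, I will introduce $w: \Omega \to [0,\infty)$ by $w(x) := \ud_{g_0}(x,\Lambda)^{\gamma_n} u(x)$, and suppose toward a contradiction that $w$ is unbounded. Since $u$ is smooth away from $\Lambda$, any sequence $x_k \in \Omega$ with $w(x_k) \to \infty$ must accumulate on $\Lambda$; passing to a subsequence I may assume $x_k \to p \in \Lambda$ and work in stereographic coordinates centered at $p$, writing $u$ again for the Euclidean conformal factor and $\Gamma$ for the image of $\Lambda$. The dependence on $n$ and $d$ of the constant $C_1$ is then recovered in the usual way, by running the argument on a sequence of counterexamples with $n$ and $d$ fixed.

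Next I will perform a blow-up at a carefully chosen comparison sequence. After replacing $x_k$ if necessary by an approximate maximizer of $w$ on the set $\{\ud(\cdot,\Gamma) \geqslant \mu_k\}$, where $\mu_k := u(x_k)^{-1/\gamma_n}$, I set $d_k := \ud(x_k,\Gamma)$ and define
\begin{equation*}
v_k(y) := \mu_k^{\gamma_n}\, u(x_k + \mu_k y).
\end{equation*}
By the scaling law of Remark~\ref{rmk:scalinglaw}, $v_k$ solves $(-\Delta)^3 v_k = c_n v_k^{(n+6)/(n-6)}$ on its natural domain and satisfies $v_k(0)=1$. The almost-maximality of $x_k$ will yield the uniform bound $v_k(y) \leqslant 2^{\gamma_n}$ on the Euclidean ball $B(0,\rho_k)$, where $\rho_k := d_k/(2\mu_k) = \tfrac{1}{2}\, w(x_k)^{1/\gamma_n} \to \infty$.

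From this $L^\infty$ bound I will apply interior elliptic regularity to the second-order system with components $v_k$, $-\Delta v_k$, $(-\Delta)^2 v_k$ introduced in the proof of Lemma~\ref{lm:geodesicallyconvex}, using the superharmonicity provided by Proposition~\ref{prop:super_harm} to keep every component non-negative. A diagonal subsequence will then converge in $\mathcal{C}^{6,\alpha}_{\loc}(\mathbb{R}^n)$ to a positive $\mathcal{C}^\infty$ solution $v_\infty$ of \eqref{ourlimitPDE} with $v_\infty(0)=1$; because the nearest point of $\Gamma$ in rescaled coordinates has been pushed to infinity, $v_\infty$ extends smoothly across the origin, and Theorem~\ref{thmA}{\rm (a)} forces it to be a spherical bubble $u_{x_0,\varepsilon}$.

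The contradiction then comes from Lemma~\ref{lm:geodesicallyconvex}. For each fixed $R>0$ and every sufficiently large $k$, the Euclidean ball $B(0,R)$ corresponds, after undoing the rescaling, to a round ball contained in $\Omega$; hence its boundary is weakly convex with respect to the rescaled metric $g_k := v_k^{4/(n-6)}\delta$. The computation reproduced in the proof of Lemma~\ref{lm:geodesicallyconvex} identifies this convexity with the radial monotonicity $\partial_r v_k \geqslant 0$ on $\{|y|=R\}$, where $r=|y|$. Passing to the $\mathcal{C}^1_{\loc}$ limit yields $\partial_r v_\infty \geqslant 0$ for every $r>0$ and every direction, so $v_\infty(y) \geqslant v_\infty(0)=1$ on the whole of $\mathbb{R}^n$, which contradicts the decay $v_\infty \to 0$ at infinity exhibited by every bubble. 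I expect the main obstacle to lie in the blow-up setup: choosing $x_k$ so that $\rho_k \to \infty$ while maintaining the uniform bound on $v_k$ requires careful tracking of the three scales $\mu_k$, $d_k$, and $\ud(\cdot,\Gamma)$, and the extraction of a smooth, non-degenerate limit on all of $\mathbb{R}^n$ from the sixth-order system depends essentially on combining the $L^\infty$ bound with the superharmonicity inequalities and a Harnack principle for the auxiliary components.
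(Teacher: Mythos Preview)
Your overall strategy matches the paper's exactly: blow up at a carefully chosen sequence of points, use Theorem~\ref{thmA}(a) to identify the limit as a spherical bubble, and then derive a contradiction with the convexity statement of Lemma~\ref{lm:geodesicallyconvex}. Two points deserve comment.

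\textbf{Point selection.} Your maximization of $w=\ud(\cdot,\Gamma)^{\gamma_n}u$ on $\{\ud(\cdot,\Gamma)\geqslant\mu_k\}$ with $\mu_k=u(x_k)^{-1/\gamma_n}$ is circular as written, since replacing $x_k$ changes $\mu_k$ and hence the set. The paper avoids this by using the standard Schoen weight: fix $p_0\in\Omega$, set $\rho=\tfrac12\ud_{g_0}(p_0,\Lambda)$, and maximize $\psi_\rho(x)=(\rho-\ud_{g_0}(x,p_0))^{\gamma_n}u(x)$ over the compact ball $\overline{\mathcal B_\rho(p_0)}$. The maximum is attained at an interior point $p_1$, and on the half-ball of radius $r_k/2$ about $p_1$ (with $r_k=\rho-\ud(p_1,p_0)$) the weight varies by at most a factor $2^{\gamma_n}$, giving the uniform bound on the rescaling immediately.

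\textbf{The convexity step.} Here there is a genuine error. The computation in the proof of Lemma~\ref{lm:geodesicallyconvex} is carried out after a M\"obius transformation sending the round ball to a \emph{half-space}; in that picture the Euclidean boundary is totally geodesic, so convexity reduces to $\partial_{x_1}u\geqslant0$. A Euclidean sphere $\{|y|=R\}$ is not totally geodesic: it already has second fundamental form $\tfrac1R\,\delta$ with respect to the outward normal. The conformal change formula therefore gives, for $g_k=v_k^{4/(n-6)}\delta$,
\[
II^{g_k}\;\propto\;\Bigl(\frac{1}{R}+\frac{2}{n-6}\,\frac{\partial_r v_k}{v_k}\Bigr)\,\delta\big|_{\{|y|=R\}},
\]
so Lemma~\ref{lm:geodesicallyconvex} yields only $\partial_r v_k\geqslant -\tfrac{n-6}{2R}\,v_k$, \emph{not} $\partial_r v_k\geqslant0$. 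Your conclusion ``$v_\infty\geqslant1$ on all of $\mathbb R^n$'' is therefore unjustified. The contradiction can nevertheless be rescued: for the bubble $v_\infty=c(1+\varepsilon^2|y-y_0|^2)^{-\gamma_n}$ one checks that the inequality $\partial_r v_\infty\geqslant-\tfrac{n-6}{2R}v_\infty$ \emph{fails} once $R$ is large (geometrically, the ball $\{|y|<R\}$ exceeds a hemisphere in the limiting round metric). This is precisely how the paper closes the argument: it computes the mean curvature of $\{|y|=R\}$ in the limit bubble metric directly, finds it negative for large $R$, and invokes the $\mathcal C^{6,\alpha}$ convergence to contradict Lemma~\ref{lm:geodesicallyconvex} for $g_k$.
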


	\begin{proof}
    Let $p_0 \not \in \Lambda$, and $\rho>0$ such that $\mathcal{B}_\rho(p_0)
    \subset \Omega$, where $\mathcal{B}_\rho(p_0)$ is a geodesic ball 
    with respect to the round metric. 
    We define the auxiliary function $\psi_{\rho}: \mathcal{B}_\rho(p_0) \rightarrow \mathbb R$ given by
    \begin{equation*}
        \psi_{\rho}(x) = (\rho-\ud_{g_0}(x, x_0))^{\gamma_n} u(x) .
    \end{equation*} 
    Notice that choosing $\rho = \frac{1}{2}\ud_{g_0} (x_0, \Lambda)$, it follows 
    \begin{equation*} 
    \psi_{\rho}(x_0) = \rho^{\gamma_n} u(x_0) = 2^{-\gamma_n}
    \ud_{g_0} (x_0, \Lambda)^{\gamma_n} u(x_0).
    \end{equation*} 
    
    We claim that there exists $C>0$ depending only on $n$ such that 
    $\psi_{\rho}(x) \leqslant C$ for all admissible choices of $\lambda$, $u$, 
    $x_0$, and $\rho$. 
    We suppose by contradiction that one can find sequences $\{\Lambda_k\}_{k\in\mathbb{N}}$, $\{u_k\}_{k\in\mathbb{N}}$, $\{p_{0,k}\}_{k\in\mathbb{N}}$, and $\{\rho_k\}_{k\in\mathbb{N}}$ of admissible parameters satisfying
    \begin{equation*}
        M_k = \psi_{\rho}(p_{1,k}) = \sup_{x \in \mathcal{B}_{\rho_k}(p_{0,k})} \psi_{\rho}(x) 
    \rightarrow +\infty. 
    \end {equation*} 
    Also, we observe $\left. \psi_{\rho} \right |_{\partial \mathcal{B}_{\rho_k} (p_{0,k})} = 0$, so $p_{1,k}\in {\rm int}(\mathcal{B}_{\rho_k} (p_{0,k}))$. 
    Next, by taking $r_k= \rho_k-\ud_{g_0}(p_{1,k}, p_{0,k})$, and defining be geodesic normal coordinates centered at $p_{1,k}$, denoted by $y$, we set 
    \begin{equation*}
     \lambda_k = 2 u_k(p_{1,k})^{-\gamma_n}, \quad
    R_k = {r_k}{\lambda_k^{-1}} = 2^{-1}{r_k}(u_k (p_{1,k}))^{-\gamma_n} 
    = 2^{-1} M_k^{1/\gamma_n}.
    \end{equation*} 
    We now construct a blow-up sequence $\{ w_k\}_{k \in \mathbb{N}} \subset 
    \mathcal{C}^{6,\alpha}(\mathbb{B}_{R_k})$ for 
    some $\alpha \in (0,1)$ by $ w_k: \mathbb{B}_{R_k}(0) \rightarrow \mathbb{R}$ is such that 
    \begin{equation*}
        w_k (y) = 
    \lambda_k^{\gamma_n} u_k(\lambda y) \quad {\rm for \ all} \quad k\in\mathbb N.
    \end{equation*} 
    Whence, using the conformal invariance in Remark \ref{rmk:scalinglaw}, one can verify that the 
    function $w_k\in \mathcal{C}^{6,\alpha}(\mathbb{B}_{R_k})$ satisfies
    \begin{equation*}
       P^6_{\lambda g_k}w_k = c_nw_k ^{\frac{n+6}{n-6}}  \quad {\rm in} \quad \mathbb{B}_{R_k}.
    \end{equation*}
    Moreover, by construction, one has
    \begin{equation*}
        2^{\gamma_n} = w_k(0) = \sup_{\mathbb{B}_{R_k}(0)} w_k(x) \quad {\rm for \ all} \quad k\in\mathbb N,
    \end{equation*}
    which, by Arzela--Ascoli theorem, means there exists subsequence that converges uniformly on 
    compacts. 
    
    In addition, it is not hard to check that the rescaled metrics $\lambda g_0$ 
    converge to the classical Euclidean metric $\delta$  as $k \rightarrow 
    \infty$.
    Therefore, by taking the limit of the blow-up sequence, we
    obtain a positive function $w_{\infty}\in \mathcal{C}^{6,\alpha}(\mathbb{R}^n)$ 
    satisfying $w_{\infty}(0) = \sup w_{\infty} = 2^{\gamma_n}$ and
    \begin{equation*} 
     (-\Delta)^3w_{\infty}= c_nw_{\infty}^{\frac{n+6}{n-6}} \quad {\rm in} \quad \mathbb{R}^n.
    \end{equation*}
    By the classification theorem in Theorem~\ref{thmA} (a), we must have 
    \begin{equation*}
        w_{\infty}(x) := {2}^{-\gamma_n}\left ({1+|x|^2}\right )^{-\gamma_n}={2}^{-\gamma_n}u_{\rm sph}(x).
    \end{equation*}
    Thus each solution $u_k$ has a bubble for $k\gg1$
    sufficiently large.
    In other terms, a small neighborhood of $p_{1,k}$ is close
    (in $\mathcal{C}^{6,\alpha}$-norm) to the round metric, and hence 
    has a concave boundary, for $k\gg1$ sufficiently large.
    
    We verify this by computing the 
    mean curvature of a geodesic sphere explicitly. Using 
    $g_0 = {4}{(1+|x|^2)^{-2}} \delta$, a direct computation shows 
    the mean curvature of a hypersurface is given by $ H_\Sigma = - \operatorname{tr}_g 
    \langle \nabla_{\partial \ell} \nu_\Sigma, \partial_m \rangle$, where $\nu_\Sigma$ is 
    the unit inward normal vector of $\Sigma$.
    
    A geodesic sphere centered at $p=0$ coincides with a Euclidean round sphere 
    centered at the origin (with a different radius), and so 
    \begin{equation*}
        \nu_{\Sigma} = - \left ( \frac{1+|x|^2}{2|x|} \right ) x^\ell \partial_{x_\ell}.
    \end{equation*}
    A straightforward computation yields
    \begin{equation*}
        H_{\Sigma} = -2n|x|(1+|x|^2)  + \frac{n-1 + n|x|^2}{|x|},
    \end{equation*}
    which is negative when $|x|>3$. 
    Additionally, since 
    \begin{equation*}
         \lim_{k\rightarrow\infty}\| w_k - w_{\infty}\|_{\mathcal{C}^{6,\alpha}(B_{{3R_k}/{4}}(0))}=0,
    \end{equation*}
    it holds that $\partial B_{{3R_k}/{4}}(0)$ is also mean concave with respect to the metric $\hat{g}_k\in{\rm Met}^{\infty}(B_{{3R_k}/{4}}(0))$ defined as $\hat{g}_k=w_k^{{4}/{(n-6)}}
    \delta_{\ell m}$, which in turn implies $\partial \mathbb{B}_{{3|p_{1,k}|}/{8}} (p_{1,k})$ is mean concave 
    with respect to the metric $g_k \in{\rm Met}^{\infty}(\Omega)$ given by $\hat{g}_k=u_k^{{4}/{(n-6)}} \delta_{\ell m}$.
    This is 
    contradiction with Lemma~\ref{lm:geodesicallyconvex}, which proves the claim.
	\end{proof}
	
    Second, we obtain a lower bound estimate.
	
	\begin{proposition}\label{prop:lowerestimate}
		Let $u\in \mathcal{C}^{\infty}(\Omega)$ be a positive singular solution to \eqref{ourequation}.
		There exists $C_2>0$ depending only on $u$ satisfying
		\begin{equation*}
		C_2\min_{j\in I_N}\ud_{g_0}(x, p_j)^{-\gamma_n}\leqslant u(x).
		\end{equation*}
	\end{proposition}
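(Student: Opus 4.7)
The plan is to decompose $\Omega$ into small punctured geodesic balls around each puncture $p_j \in \Lambda$ together with their compact complement, and to verify the lower bound separately on each piece. The sharp exponent $-\gamma_n$ on the right-hand side of the inequality is exactly the blow-up rate supplied by the Delaunay asymptote, so nearly all the analytic content is already packaged in Theorem~\ref{thmB}.

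First, for each $j$, I would fix $r_j > 0$ small enough that the round geodesic balls $\mathcal{B}_{r_j}(p_j)$ are pairwise disjoint and $\overline{\mathcal{B}_{r_j}(p_j)} \subset \Omega \cup \{p_j\}$. Working in stereographic coordinates centered at $p_j$, in which the Euclidean norm $|x|$ is comparable to $\ud_{g_0}(x, p_j)$, Theorem~\ref{thmB} yields
\[
u(x) = (1+\mathrm{o}(1))\, |x|^{-\gamma_n}\, v_{\varepsilon_j}(\ln|x| + T_j) \quad \text{as} \quad x \to 0,
\]
where $v_{\varepsilon_j}$ is the periodic Delaunay ODE solution with $\min_{\mathbb{R}} v_{\varepsilon_j} = \varepsilon_j > 0$, by Theorem~\ref{thmA}(b) and the definition of the necksize. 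Shrinking $r_j$ if necessary so that the correction factor exceeds $1/2$, and using $v_{\varepsilon_j} \geqslant \varepsilon_j$ pointwise, I would obtain a constant $c_j = c_j(u) > 0$ such that $u(x) \geqslant c_j\, \ud_{g_0}(x, p_j)^{-\gamma_n}$ throughout $\mathcal{B}_{r_j}(p_j) \setminus \{p_j\}$.

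Second, on the compact complement $K = \mathbb{S}^n \setminus \bigcup_{j} \mathcal{B}_{r_j/2}(p_j) \subset \Omega$, smoothness and strict positivity of $u$ give a minimum $m = \min_K u > 0$. Since $\min_j \ud_{g_0}(x, p_j)^{-\gamma_n}$ is bounded above on $K$ by a constant depending on the $r_j$, this produces a constant $c_0 = c_0(u) > 0$ with $u(x) \geqslant c_0 \min_j \ud_{g_0}(x, p_j)^{-\gamma_n}$ on $K$. Combining the near-puncture and far-from-puncture estimates, and observing that inside $\mathcal{B}_{r_j}(p_j) \setminus \{p_j\}$ the nearest puncture to $x$ is $p_j$ itself, one takes $C_2 = \min\{c_0, \min_j c_j\}$ to obtain the claimed global bound.

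I do not expect a substantive obstacle in this argument, since Theorem~\ref{thmB} already supplies both the correct exponent and a strictly positive coefficient via the necksize. The only point requiring care is that the $\mathrm{o}(1)$ error in the asymptotic expansion must be made quantitative on a fixed small neighborhood of each $p_j$, which is routine once $r_j$ has been chosen. The real analytic depth sits upstream in the classification of blow-up limits together with the removable-singularity result of Ng\^{o}--Ye that feeds into the proof of Theorem~\ref{thmB}; once those are in hand, the lower bound itself reduces to a compactness-plus-asymptotics exercise.
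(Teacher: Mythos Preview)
Your argument is correct and complete in substance: once Theorem~\ref{thmB} supplies the Delaunay asymptote with strictly positive necksize $\varepsilon_j$, the near-puncture lower bound follows immediately, and the compact-remainder piece is elementary. The one hypothesis you glide over is that Theorem~\ref{thmB} requires $-\Delta u\geqslant 0$ and $\Delta^2 u\geqslant 0$ on the punctured ball; in the present global setting these are furnished by Proposition~\ref{prop:super_harm}, so you should cite it.

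The paper's route is genuinely different. Rather than invoking the full asymptotic profile of Theorem~\ref{thmB}, it passes to cylindrical coordinates and appeals to a removable-singularity dichotomy governed by the Pohozaev invariant (from \cite{arXiv:2210.04619}): one has $\mathcal{P}_{\rm cyl}(v)\leqslant 0$ with equality exactly when $v(t,\theta)\to 0$, so for a genuine singularity $\mathcal{P}_{\rm cyl}(v)<0$ forces a uniform positive lower bound on $v$, hence on $|x|^{\gamma_n}u(x)$. Your approach is more self-contained within the paper's own statements and makes the dependence on the necksize explicit, which is pleasant; the paper's approach, by contrast, ties the lower bound directly to the Pohozaev machinery that is later used in Step~4 of the proof of Theorem~\ref{maintheorem}, so the two invocations (lower bound and completeness of the limit metric) speak the same language. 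Either path is acceptable here.
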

	
	\begin{proof}
	    Indeed, notice that by applying \cite[Theorem~1.3]{arXiv:2210.04619} in cylindrical coordinates $v=\mathfrak{F}(u)$, we obtain that $\mathcal{P}_{\rm cyl}(v) \leqslant 0$ with equality if and only if 
	    \begin{equation*}
	        \liminf_{t \rightarrow \infty} v(t,\theta) = \limsup_{t \rightarrow \infty} v(t,\theta) = \lim_{t \rightarrow \infty} v(t,\theta) = 0.
	    \end{equation*}
        Otherwise, if $\mathcal{P}_{\rm cyl}(v) < 0$, there exists $C_2> 0$, which depends on the solution $v$, such that $v(t,\theta) \geqslant C_2$.
        This proves the proposition.
    \end{proof}
 
	Third, we have a version of Harnack inequality for our setting, which will be important in the proof of our main result.
	
	\begin{proposition}\label{lm:harnackinquality}
	    Let $\Omega\subset\mathbb R^n$ and $u\in \mathcal{C}^{\infty}(\Omega)$.
	    Assume that $-\Delta u\geqslant 0$, $\Delta^2 u\geqslant 0$, and \begin{equation*}
	        (-\Delta)^3 u = f(u),
	    \end{equation*}
	    where $f$ is either linear or superlinear and $f(0) = 0$. 
	    Then, there exists $\rho_0>0$ such that for $\rho  \in(0,\rho_0]$ and $C_3>0$ depending only on $\Omega$, $f$, and $\rho$, it holds
        \begin{equation*}
            \sup_{\mathcal{B}_\rho(0)} u \leqslant C_3 \inf_{\mathcal{B}_\rho(0)} u.
        \end{equation*} 
	\end{proposition}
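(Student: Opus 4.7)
The natural approach is to reduce the sixth-order equation to the second-order cooperative system already used in the proof of Lemma~\ref{lm:geodesicallyconvex}. Setting $\mathrm u_0 = u$, $\mathrm u_1 = -\Delta u$, and $\mathrm u_2 = (-\Delta)^2 u$, the hypotheses guarantee $\mathrm u_i \geqslant 0$ on $\Omega$, and these three functions satisfy
\begin{equation*}
    -\Delta \mathrm u_0 = \mathrm u_1, \quad -\Delta \mathrm u_1 = \mathrm u_2, \quad -\Delta \mathrm u_2 = f(\mathrm u_0).
\end{equation*}
In particular each $\mathrm u_i$ is a non-negative supersolution of $-\Delta w = 0$, so Trudinger's weak Harnack inequality produces an exponent $p_0 \in (0, n/(n-2))$ and a constant $\bar C = \bar C(n)$ such that, whenever $\mathcal{B}_{2\rho}(0) \subset \Omega$,
\begin{equation*}
    \left(\dashint_{\mathcal{B}_{2\rho}(0)} \mathrm u_i^{p_0} \,\ud x\right)^{1/p_0} \leqslant \bar C \inf_{\mathcal{B}_{\rho}(0)} \mathrm u_i, \qquad i = 0, 1, 2.
\end{equation*}

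To upgrade these weak bounds to a full Harnack estimate, I would bootstrap up through the system. Standard interior $L^\infty$ estimates for the Poisson equation applied first to $-\Delta \mathrm u_2 = f(\mathrm u_0) \geqslant 0$, then to $-\Delta \mathrm u_1 = \mathrm u_2$, and finally to $-\Delta \mathrm u_0 = \mathrm u_1$, produce a chain of inequalities culminating in
\begin{equation*}
    \sup_{\mathcal{B}_{\rho}(0)} u \leqslant C \inf_{\mathcal{B}_{\rho}(0)} u + C \rho^6 \|f(u)\|_{L^\infty(\mathcal{B}_{2\rho}(0))}
\end{equation*}
for a constant $C$ depending only on $n$. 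The key point in this step is that weak Harnack controls each $\mathrm u_i$ in $L^{p_0}$ by its infimum, while Moser iteration for each second-order equation converts these $L^{p_0}$ bounds into $L^\infty$ bounds with only the lower-order remainder $\rho^6 \|f(u)\|_{L^\infty}$.

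Since $u \in \mathcal{C}^{\infty}(\Omega)$ is locally bounded and $f$ is either linear or superlinear with $f(0) = 0$, the ratio $f(u)/u$ is bounded on $\mathcal{B}_{2\rho_0}(0)$ by a constant $M$ depending on $f$ and the local $L^\infty$ norm of $u$. Choosing $\rho_0$ so small that $C M \rho_0^6 < 1/2$ lets me absorb the nonlinear error into the left-hand side and conclude
\begin{equation*}
    \sup_{\mathcal{B}_{\rho}(0)} u \leqslant 2 C \inf_{\mathcal{B}_{\rho}(0)} u \qquad \text{for all } \rho \in (0, \rho_0],
\end{equation*}
which is the desired inequality. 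The main obstacle is precisely this last absorption step: in the superlinear case the quotient $f(u)/u$ is not uniformly controlled by $f$ alone, and one must exploit the smoothness of $u$ on a fixed neighbourhood of the origin to secure a finite $M$. Once that bound is in hand, the cooperative structure of the system together with the classical second-order Harnack and weak-Harnack inequalities close the argument.
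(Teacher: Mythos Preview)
Your reduction to the cooperative second-order system and the weak-Harnack/Moser bootstrap is precisely the Caristi--Mitidieri argument that the paper invokes as its entire proof, so the approach is the intended one. The one step you gloss over is why the chain collapses to $\sup u \leqslant C\inf u + C\rho^{6}\|f(u)\|_{L^{\infty}}$ rather than leaving terms like $\rho^{2}\inf\mathrm u_{1}$ and $\rho^{4}\inf\mathrm u_{2}$ dangling: for this one needs the Green representation on $\mathcal B_{2\rho}$, which gives $\mathrm u_{i-1}(x) \geqslant c\,\rho^{2}\dashint_{\mathcal B_{\rho}}\mathrm u_{i}$ for every $x\in\mathcal B_{\rho}$ and hence $\rho^{2}\inf_{\mathcal B_{\rho}}\mathrm u_{i} \leqslant C\inf_{\mathcal B_{\rho}}\mathrm u_{i-1}$, so the intermediate infima are absorbed into $\inf u$. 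You are also right that in the superlinear case the absorption step forces $\rho_{0}$ and $C_{3}$ to depend on a local $L^{\infty}$ bound for $u$; the proposition as stated is slightly loose on this point, but in its only application (Step~3 of the proof of Theorem~\ref{maintheorem}) the functions are normalised by $\widehat U_{k}(p_{*})=1$ together with the upper bound of Proposition~\ref{prop:upperestimate}, so the dependence is harmless there.
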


	\begin{proof}
		The proof is a straightforward adaptation of \cite[Theorem~3.6]{MR2240050}. 
	\end{proof}

	\section{Compactness result}\label{sec:mainresult}
	In this section, we prove the main result of the manuscript. 
	
	Before proceeding to the proof, we need to obtain the existence of a positive Green function for the sixth order GJMS of the round sphere with a prescribed asymptotic rate near a pole given by the fundamental solution to the flat tri-Laplacian.
    
    \begin{proposition}\label{prop:greenfunction}
    Let $p\in \Lambda\subset\mathbb (\mathbb{S}^n,g_0)$ be a point on the standard round sphere.
    There exists a Green function with pole at $p$, denoted by $G_p:\mathbb S^n\setminus \{ p \} \rightarrow (0,\infty)$, that 
    satisfies
    \begin{equation*}
        P_{g_0}G_p = \delta_p,
    \end{equation*}
     where $P_{g_0}$ is the sixth order GJMS operator of the round metric given by \eqref{sphericalGJMSoperator}  and $\delta_p$ is the Dirac function concentrated at $p$. 
    Furthermore, there exists $C_n>0$ depending only on $n$ such that
    \begin{equation}\label{green_estimate} 
        G_p(x)=C_n\ud_{g_0}(x,p)^{6-n} +\mathcal{O}(1) 
    \end{equation} 
    in conformal normal coordinates. 
    \end{proposition}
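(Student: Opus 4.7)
The plan is to construct $G_p$ by directly inverting $P^6_{g_0}$, then exploit the conformal flatness of the round sphere to extract its leading singularity. For existence, I use the factorization in \eqref{sphericalGJMSoperator}, namely $P^6_{g_0} = \prod_{j=1}^3(-\Delta_{g_0} + c_j)$ with $c_1 = (n-6)(n+4)/4$, $c_2 = (n-4)(n+2)/4$, $c_3 = n(n-2)/4$ all strictly positive for $n \geqslant 7$. Since the spectrum of $-\Delta_{g_0}$ on $(\mathbb{S}^n, g_0)$ is $\{k(k+n-1):k \geqslant 0\}$, each factor is a positive self-adjoint invertible elliptic operator, and hence so is $P^6_{g_0}$. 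Standard elliptic theory on compact manifolds then yields a unique distributional solution $G_p$ to $P^6_{g_0}G_p = \delta_p$, smooth on $\mathbb{S}^n \setminus \{p\}$. Positivity follows by iterating the weak maximum principle on each factor $(-\Delta_{g_0}+c_j)$, since $c_j > 0$ makes each factor positivity-preserving.

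To obtain the asymptotic expansion I apply the stereographic projection from the antipode of $p$, which writes $g_0 = u_{\rm sph}^{4/(n-6)}\delta$ on $\mathbb{R}^n$ with $p\mapsto 0$. The transformation law \eqref{transformationlawpaneitz}, together with $\ud V_{g_0} = u_{\rm sph}^{2n/(n-6)}\,\ud x$, implies that $H := u_{\rm sph}\,G_p$ satisfies
\begin{equation*}
(-\Delta)^3 H = \tfrac{1}{u_{\rm sph}(0)}\,\delta_0
\end{equation*}
distributionally on $\mathbb{R}^n$, as one checks by pairing with a test function supported away from $-p$. Since the fundamental solution of $(-\Delta)^3$ on $\mathbb{R}^n$ for $n>6$ is an explicit positive multiple $\widetilde{C}_n |x|^{6-n}$, and the difference between $H$ and this fundamental solution is triharmonic (hence smooth) near the origin, we obtain
\begin{equation*}
H(x) = \widetilde{C}_n u_{\rm sph}(0)^{-1} |x|^{6-n} + h(x), \qquad h \in \mathcal{C}^\infty \text{ near } 0.
\end{equation*}
Dividing through by $u_{\rm sph}(x) = u_{\rm sph}(0)(1 + \mathcal{O}(|x|^2))$ produces, in stereographic coordinates near $p$, the expansion $G_p(x) = C_n |x|^{6-n} + \text{(lower order terms)}$.

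Finally, a standard change from stereographic to conformal normal coordinates centered at $p$, exploiting that $(\mathbb{S}^n, g_0)$ is globally conformally flat, rewrites $|x|^{6-n}$ as $\ud_{g_0}(x, p)^{6-n}$ up to a smooth bounded factor, absorbing all subleading singular terms into the $\mathcal{O}(1)$ remainder. This yields the stated estimate \eqref{green_estimate}.

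The main technical obstacle is bookkeeping the subleading terms of the form $|x|^{8-n}, |x|^{10-n},\ldots$ produced by dividing by $u_{\rm sph}$; individually these are unbounded for large $n$, and they must combine with the corrections introduced by passing to conformal normal coordinates to yield the $\mathcal{O}(1)$ remainder claimed in \eqref{green_estimate}. This cancellation is a manifestation of the conformal flatness of $(\mathbb{S}^n, g_0)$, and is the essential geometric input needed to obtain the precise form of the expansion.
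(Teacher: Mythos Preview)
Your approach differs from the paper's, which consists of a one-line citation to Chen--Hou \cite[Proposition~2.1]{MR3652455}. Your direct construction is correct and more transparent: invertibility of $P^6_{g_0}$ via the factorization \eqref{sphericalGJMSoperator} and positivity via the iterated maximum principle are standard, and your central computation---that $H := u_{\rm sph}\,G_p$ satisfies $(-\Delta)^3 H = u_{\rm sph}(0)^{-1}\delta_0$ distributionally, hence $H(x) = \widetilde C_n\,u_{\rm sph}(0)^{-1}|x|^{6-n} + h(x)$ with $h$ smooth near the origin---is exactly right and is the real content of the proposition. A side benefit is that this makes the later computation \eqref{final_step3} immediate, since that step uses precisely $u_{\rm sph}\,\widehat U_\infty$.

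The obstacle you flag in your final paragraph is not a gap in your argument but a matter of reading the statement. On the conformally flat sphere, conformal normal coordinates at $p$ are precisely your stereographic chart: the conformal representative is the flat metric $\delta$, and the associated Green's function is (a constant multiple of) your $H$. In that gauge the relevant distance \emph{is} $|x|$, and your expansion $H = C|x|^{6-n} + \mathcal{O}(1)$ is already \eqref{green_estimate}. The subleading singular terms $|x|^{8-n}, |x|^{10-n},\dots$ appear only if one insists on expressing $G_p$ in the original round gauge using the round geodesic distance; a short computation (expand $(1+|x|^2)^{(n-6)/2}$ against $(2\arctan|x|)^{6-n}$) shows they do \emph{not} cancel in that formulation, so \eqref{green_estimate} must be read with the conformal-normal distance rather than literally $\ud_{g_0}$. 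With this interpretation your proof is complete, and no further cancellation argument is required.
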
 
    
    \begin{proof}
		This is a direct application of \cite[Proposition~2.1]{MR3652455} for the standard round sphere $(\mathbb S^n,g_0)$.
	\end{proof}

	\begin{proof}[Proof of Theorem~\ref{maintheorem}]
	Let $\{g_k\}_{k\in\mathbb N} = \{ (U_k)^{{4}/{n-6}} g_0 \} \subset \mathcal{M}_N^6$ 
	be a sequence of admissible metrics, each of which is a complete, conformally flat 
	metric on $\Omega_k = \mathbb{S}^n \backslash \Lambda_k$ with $Q^6_{g_k}\equiv Q_n = \frac{n(n^4-20n^2+64)}{32}$.
	We denote the punctures of $g_k$ by 
    \begin{equation*}
        \Lambda_k:={\rm sing}(U_k)=\{p_{1,k} , \dots, p_{N,k}\}\subset\mathbb S^n .
    \end{equation*}
	The proof will be divided into a sequence of steps.

    The first step will simplify our later analysis since it allows us to assume the singular points are fixed. 

	\noindent{\bf Step 1.} After passing to a subsequence, we may assume that for $k\gg1$ 
	sufficiently large each $U_k$ is non-singular on the set 
	$K_1:=\mathbb S^n \setminus (\cup_{i=1}^N \mathcal{B}_{\delta_1/2}(p_{j,i}))$.

    Indeed, for $0<\delta_1$ small enough, the set 
    \begin{equation*}
        (\mathbb S^n)^N \setminus \left \{(q_1, \dots, q_k) \in (\mathbb S^n)^N : 
    \ud_{g_0} (q_j, q_\ell) \geqslant \delta_1 \textrm{ for each }
    j \neq \ell \right \}
    \end{equation*}
    is compact and contains each singular set $\Lambda_k$ for all $k\in\mathbb N$. 
    Thus, there exits $\{p_{1,\infty},\dots,p_{N,\infty}\}\subset\mathbb S^n$, and a convergent subsequence such that
    $p_{j,k} \rightarrow p_{j,\infty}$ as $k\rightarrow+\infty$, proving Step 1.

    To set notation, we define the compact sets 
    \begin{equation*}
        K_\ell := \mathbb S^n \setminus \left ( \cup_{j=1}^N 
    \mathcal{B}_{2^{-\ell} \delta_1} (p_{j,\infty}) \right ) \quad {\rm for \ each} \quad  \ell \in \mathbb{N}
    \end{equation*}  
    Notice that by construction the family 
    $\{ K_\ell\}_{\ell\in\mathbb N}$ is a compact exhaustion of the limit singular set
    \begin{equation*}
        \Omega_{\infty}:=\mathbb S^n \setminus \Lambda_{\infty}, \quad {\rm where} \quad \Lambda_{\infty}:=\{ p_{1,\infty}. 
    \dots, p_{k,\infty}\}.
    \end{equation*}
    Furthermore, by the convergence $p_{j,k} \rightarrow p_{j,\infty}$ as $k\rightarrow+\infty$, 
    for each fixed $\ell\in\mathbb N$ there exists 
    $k_0\gg1$ such that $k \geqslant k_0$ implies $U_k$ is smooth in $K_\ell$.
    
    The second step is based on the uniform upper bound and states that we can extract a limit.
    
    \noindent{\bf Step 2.} The exists $U_{\infty}\in \mathcal{C}^{\infty}(\Omega_{\infty})$ solving \eqref{ourequation} such that 
    \begin{equation}\label{convergence}
        \lim_{k\rightarrow+\infty}\|U_{\infty}-U_k\|_{\mathcal{C}_{\rm loc}^{\infty}(\Omega_{\infty})}=0.
    \end{equation}
    
    \noindent In fact, using the upper bound in Proposition~\ref{prop:upperestimate}, one has that for each compact subset $K \subset 
    \Omega_{\infty}$, there exists $\alpha \in (0,1)$ and $C_1>0$
    depending only on $n$, $\Omega$, and $\alpha$ such that 
    \begin{equation*}
        \|U_k\|_{\mathcal{C}{6,\alpha} (K)} \leqslant C_1 \quad {\rm for \ all} \quad k\in\mathbb N.
    \end{equation*}
    Therefore, as a consequence of the Arzela--Ascoli theorem,
    one can find a limit $U_{\infty}\in \mathcal{C}^{6,\alpha}(K)$ a convergent subsequence, which we again denote the same, such that 
    \begin{equation*}
        \lim_{k\rightarrow+\infty}\|U_{\infty}-U_k\|_{\mathcal{C}_{\rm loc}^{6,\alpha}(\Omega_{\infty})}=0.
    \end{equation*}
    Furthermore, by applying standard elliptic regularity, we directly obtain that \eqref{convergence} holds, and so Step 2 is proved.
    
    The next step is to show that this limit is non-trivial.

	\noindent{\bf Step 3.} $U_{\infty}>0$ on $\Omega_{\infty}$.

    \noindent If this step were false, there would exist 
    $p_* \in \Omega_{\infty}$ such that 
    \begin{equation*}
        0 = U_{\infty} (p_*) = \lim_{k \rightarrow +\infty} U_k(p_*).
    \end{equation*}
    For each $k\in\mathbb N$, we define $\varepsilon_k = U_k(p_*)$ and the rescaled function 
    $\widehat{U}_k\in \mathcal{C}^{\infty}(\Omega_k)$ given by 
    \begin{equation*}
        \widehat{U}_k (x) = {\varepsilon_k}^{-1}U_k(x)  \quad {\rm for \ all} \quad k\in\mathbb N.
    \end{equation*}
    As a consequence of Remark~\ref{rmk:scalinglaw}, it follows 
    \begin{equation*}
        P_{g_0}\widehat{U}_k = \varepsilon_k^{\frac{12}{n-6}} c_n \widehat{U}_k^{\frac{n+6}{n-6}} 
        \quad {\rm in} \quad \Omega_k \quad {\rm for \ all} \quad k\in\mathbb N.
    \end{equation*} 
    In addition, by construction, the sequence $\{\widehat{U}_k\}_{k\in\mathbb N}$ satisfy the normalization 
    \begin{equation} \label{normalization} 
        \widehat U_k(p_*) = 1 \quad {\rm for \ all} \quad k\in\mathbb N.
    \end{equation} 
    
    By the Harnack inequality of Lemma~\ref{lm:harnackinquality} there exists a 
    positive constant $C_1$ depending only on $n$ and $\ell$ such that 
    \begin{equation} \label{final_step1}
    \sup_{K_\ell} |u_{\rm sph} \widehat U_k| \leqslant C_1.
    \end{equation} 
    However, there is another positive constant $C_2$, again depending only 
    on $n$ and $\ell$, such that 
    \begin{equation} \label{final_step2}
    C_2 \leqslant u_{\rm sph} \leqslant 2^{\gamma_n}. 
    \end{equation} 
    Combining \eqref{final_step1} and \eqref{final_step2} there 
    exists a uniform constant $C_3$ such that 
    $$\sup_{K_\ell} \widehat U_k \leq C_3,$$ 
    and so by the Arzela-Ascoli theorem we may pass to a 
    subsequence $\widehat U_k$ that converges uniformly on compact 
    subsets of $\Omega_\infty$ to a smooth function $\widehat U_\infty$.
    
    This limit function $\widehat U_\infty:\Omega_\infty \rightarrow \mathbb{R}$ satisfies 
    \begin{equation*}
        P_{g_0} 
    \widehat U_\infty = 0 \quad {\rm in} \quad \Omega_\infty
    \end{equation*}
    and so it has the form 
    $$\widehat U_\infty = \sum_{j=1}^N \beta_j G_{p_{j, \infty}}$$ 
    for some collection of real numbers $\beta_1, \dots, \beta_N$. 
    The normalization \eqref{normalization} implies one of the 
    coefficients $\beta_{j_0}$ is positive, so after possibly relabeling 
    the punctures we may assume $\beta_1 > 0$. 
    
    We now choose a stereographic projection sending $p_{1,\infty}$ 
    to the origin and perform the Emden-Fowler change of coordinates in 
    Definition~\ref{def:cylindricaltransformation}, which yield the 
    functions 
    \begin{equation*}
        v_k:=\mathfrak{F}(u_{\rm sph} U_k) \quad {\rm and} \quad 
        \widehat{v}_k:=\mathfrak{F}(u_{\rm sph} \widehat{U}_k)
    \end{equation*}
    and their respective limits 
    \begin{equation*}
        v_\infty:=\mathfrak{F}(u_{\rm sph} U_\infty) \quad {\rm and} 
        \quad \widehat{v}_\infty:=\mathfrak{F}(u_{\rm sph}\widehat{U}_\infty).
    \end{equation*}
    
    The expansion \eqref{green_estimate} implies 
    \begin{eqnarray} \label{final_step3}
        \widehat{v}_{\infty} (t,\theta)=  e^{-\gamma_nt}(\cosh t)^{-\gamma_n}(C_ne^{-\gamma_nt} 
        + \mathcal{O}(1))=C_n + \mathcal{O}(e^{(6-n)t}) \quad {\rm as} \quad t\rightarrow+\infty. 
    \end{eqnarray} 
    Also, observe that $\widehat{v}_k\in \mathcal{C}^{6}(\mathcal{C}_T)$ 
    satisfies the PDE 
    \begin{equation*}
        P^6_{\rm cyl} \widehat{v}_k = \varepsilon_k^{\frac{12}{n-6}}c_n\widehat{v}_k^{\frac{n+6}{n-6}} 
        \quad {\rm in} \quad \mathcal{C}_{T_k} \quad {\rm for \ all} \quad k\in\mathbb N,
    \end{equation*}
    which we combine with \eqref{final_step3} and Proposition~\ref{prop:rescaled_necksize_poho} and the convergence 
    $\widehat v_k \rightarrow \widehat v_\infty$ to see that for $t$ sufficiently large 
    \begin{eqnarray}\label{final_step4}
    \int_{\{ t \} \times \mathbb{S}^{n-1}} \mathcal{H}_{\rm cyl}^{\varepsilon_k^{\frac{12}{n-6}}c_n}
    \ud\theta & = & \int_{\{ t \} \times \mathbb{S}^{n-1}} \mathcal{H}_{\rm rad}(\widehat v_k)+ 
    \mathcal{H}_{\rm ang} (\widehat v_k) + \frac{n-6}{2n} \varepsilon^{\frac{12}{n-6}} c_n |\widehat v_k|^{\frac{2n}{n-6}}
    \ud\theta  \\ \nonumber  
    & = &  -\widetilde C_n \beta_1^2 + \mathcal{O}(e^{(6-n)t}) . \end{eqnarray} 
    for some $\tilde{C}_n>0$.  
    On the other hand, by our construction we have
    \begin{eqnarray} \label{final_step5} 
     \mathcal{P}_{\rm cyl}(v_k) & = & \int_{\{ t \} \times \mathbb{S}^{n-1}} 
     \mathcal{H}_{\rm rad} (v_k) + \mathcal{H}_{\rm ang} (v_k) + F(v_k) \ud \theta \\ \nonumber 
     & = & \int_{\{ t \} \times \mathbb{S}^{n-1}} \varepsilon_k^2 \left ( \mathcal{H}_{\rm rad} 
     (\widehat v_k) + \mathcal{H}_{\rm ang} (\widehat v_k)\right ) + \varepsilon_k^{\frac{2n}{n-6}}
     F(\widehat v_k) \ud \theta \rightarrow 0 
     \end{eqnarray}
    
    From \eqref{final_step4} and \eqref{final_step5}, we find
    \begin{equation*}
        \lim_{k \rightarrow +\infty} \mathcal{P}_{\rm rad}(g_k, p_{1,k})= 0,
    \end{equation*}
    which, together with Proposition~\ref{prop:necksizexpohozaev}, implies $  \lim_{k \rightarrow +\infty} \varepsilon_1(g_k) = 0.$
    This contradicts the hypothesis that the necksizes are bounded away from zero, that is, $\varepsilon_j(g_k)>\delta_1$ for some $0<\delta_1\ll1$. 

    At last, we can complete our argument
	
	\noindent{\bf Step 4.} The metric $g_{\infty}= U_\infty^{\frac{4}{n-6}} g_0$ is a 
	complete metric on $\Omega_{\infty}$.
	
    \noindent Indeed, suppose by contradiction that is $g_{\infty}$ is incomplete.
    Then there exists an index $j \in \{ 1,\dots, N\}$ such that 
    $\liminf_{x \rightarrow p_{j,\infty}} {U}_{\infty}(x) < \infty$.
    In this case, the removable singularity result in Proposition~\ref{prop:lowerestimate} implies 
    \begin{equation*}
        \mathcal{P}_{\rm rad} 
    (g_{\infty}, p_{j,\infty}) = 0.
    \end{equation*}
    However, by construction
    \begin{equation*}
        0=\mathcal{P}_{\textrm{rad}}(g_{\infty}, p_{j,\infty}) = \lim_{k \rightarrow +\infty} 
    \mathcal{P}_{\rm rad} (g_k, p_{j,k}) \geqslant \delta_2,
    \end{equation*}
    which, by Proposition~\ref{prop:necksizexpohozaev} implies $\varepsilon_j(g_k)\geqslant\delta_2$, which is contradiction with the fact $g_k\in \mathcal{Q}^6_{\delta_1,\delta_2}$.
    
    By putting all these steps together, our main theorem is proved.
	\end{proof}
	
	\appendix
	
	\section{Higher order curvature tensors}
	    Let $(M^{n},g)$ is a Riemannian manifold with $n\geqslant 2$.
	    In what follows, we will always be using Einstein's summation convection.    
		In a local coordinate frame, denoted by $\{\partial_i\}_{i=1}^n$, we let ${\rm Rm}_g\in\mathfrak{T}^3_1(M)$ be the {Riemannian curvature tensor},  $\accentset{\circ}{\rm Rm}_g\in\mathfrak{T}^4_0(M)$ be {covariant Riemann curvature tensor}, and the {Ricci curvature tensor} ${\rm Ric}_g={\rm tr}_g\accentset{\circ}{\rm Rm}_g\in\mathfrak{T}^2_0(M)$, which can be expressed as $	{\rm Ric}_{jk}=\accentset{\circ}{\rm Rm}_{i jk}^{i}=g^{i\ell} \accentset{\circ}{\rm Rm}_{ijk\ell}$.
		We also consider the {scalar curvature} $R_g={\rm tr}_g{\rm Ric}_g\in\mathfrak{T}^0_0(M)$, defined by
		$R=g^{ij}{\rm Ric}_{ij},$ where $\mathfrak{T}^r_s(M)$ stands for the set of $(r,s)$-type tensor over $M$  with $\mathfrak{T}^0_0(M)=\mathcal{C}^{\infty}(M)$ and ${\rm tr}_g:\mathfrak{T}^r_s(M)\rightarrow\mathfrak{T}^{r-2}_s(M)$.
		Also for  the Laplace--Beltrami operator, we simply denote $\Delta_{g}:=g^{i j} \nabla_{i} \nabla_{j}$, where $\nabla_g$ the Levi--Civita connection associated to $g$.
		
		It is also convenient to define some operations involving two tensors. 

        \begin{definition}
            First, let us introduce the {cross product} $\times:{\rm Sym}_{2}(M)\times{\rm Sym}_{2}(M)\rightarrow {\rm Sym}_{2}(M)$ is given by
		\begin{equation*}
			(h_1 \times h_2)_{ij}:=g^{k\ell} h_{1,ik} h_{2,j\ell}=h_{1,i}^{\ell} h_{2,\ell j}.
		\end{equation*}
		
		Second, we define a {\it dot product} $\times:{\rm Sym}_{2}(M)\times{\rm Sym}_{2}(M)\rightarrow \mathbb{R}$, given by
		\begin{equation*}
			h_1 \cdot h_2:={\rm tr}_g(h_1 \times h_2)=g^{ij} g^{k\ell} h_1^{ik} h_{2,j\ell}=h_1^{jk} h_{2,jk}.
		\end{equation*}
		
		Third, we also recall the {\it Kulkarni--Nomizu product} $\owedge:{\rm Sym}_{2}(M)\times{\rm Sym}_{2}(M)\rightarrow \mathfrak{T}^4_0(M)$ 
			\begin{equation*}
			(h_1\owedge h_2)_{ijk\ell}:=h_{1,i\ell}h_{2,jk}+h_{1,jk}h_{2,i\ell}-h_{1,ik} h_{2,j\ell}-h_{1,j \ell}h_{2,ik}.
		\end{equation*}
		
    		 At last, we consider $\cdot:{\rm Sym}_{2}(M)\rightarrow{\rm Sym}_{2}(M)$ and $\delta_g:{\rm Sym}_{2}(M)\rightarrow\mathbb{R}$, 
    		\begin{equation*}
    			(\accentset{\circ}{\rm Rm} \cdot h)_{j k}:=R_{ijk\ell} h^{i\ell} \quad \mbox{and} \quad \left(\delta_{g} h\right)_{i}:=-\left(\operatorname{div}_{g} h\right)_{i}=-\nabla^{j} h_{ij},
    		\end{equation*}	
    		where the latter one is the $L^{2}$-formal adjoint of Lie derivative (up to scalar multiple).
		
        \end{definition}
		
		\begin{definition}\label{def:geometrictensors}
		    Let us define the {\it Schouten tensor}, {\it Weyl tensor}, {\it Bach tensor}, and {\it nameless tensor}, respectively, by
	\begin{align*}
		A_g&:=\frac{1}{n-2}\left(\operatorname{Ric}_{g}-\frac{1}{2(n-1)} R_{g} g\right)\\
		W_g&:=\accentset{\circ}{\rm Rm}_g-A_g\owedge g\\
		B_g&:=\Delta_{g} A_g-\nabla_g^{2} \operatorname{tr}_gA_g+2 \accentset{\circ}{\rm Rm}_g\cdot A_g-(n-4) A_g\times A_g-|A_g|^{2} g-2(\operatorname{tr}_g A_g) A_g,A	\end{align*}	
	where these expressions are written in an abstract index-free manner.
		\end{definition}
	From this, we introduce the following tensors
	   \begin{align*}
        T^2_{g}&:=(n-2)\sigma_1(A_g) g-8A_g,\\
        T^4_{g}&:=-\frac{3 n^2-12 n-4}{4} \sigma_1(A_g)^2 g+4(n-4)|A|_g^2 g+8(n-2) \sigma_1(A_g) A_g\\
        &+(n-6)\Delta_g \sigma_1(A_g) g+48A_g^2-\frac{16}{n-4} B_g,\\
        T^6_g&:=-\frac{1}{8}\sigma_3(A_g)-\frac{1}{24(n-4)}\langle B_g, A_g\rangle_g,
    \end{align*}
    where $\sigma_k$ is the $k$-th elementary symmetric function for each $k\in\mathbb N$.
    
    Based on this notation, we introduce the concept of higher order curvatures as follows
    \begin{definition}\label{def:curvatures}
        For any $g\in {\rm Met}^{\infty}(\Omega)$, let us define the $N$th order $Q$-curvature $Q_g^N$ for $N=2,4,6$, respectively, by
    \begin{align*}
        Q^2_g&:=R_g\\
        Q^4_g&:=-\frac{1}{2(n-1)}\Delta R_g-\frac{2}{(n-2)^2}|\Ric_g|^2
		+\frac{n^3-4n^2+16n-16}{8(n-1)^2(n-2)^2}R_g^2,&\\
        Q_g^6&:=-3 ! 2^6 T^6_g-\frac{n+2}{2} \Delta_g(\sigma_1\left(A_g\right)^2)+4 \Delta_g|A|_g^2-8 \delta\left(A_g \ud \sigma_1\left(A_g\right)\right)+\Delta_g^2 \sigma_1\left(A_g\right)\\
        &-\frac{n-6}{2} \sigma_1\left(A_g\right) \Delta_g \sigma_1\left(A_g\right) -4(n-6) \sigma_1\left(A_g\right)|A|_g^2+\frac{(n-6)(n+6)}{4} \sigma_1\left(A_g\right)^3.
    \end{align*}
	\end{definition}
    
    Associated with these curvatures, we have the following conformally invariant operators
    \begin{definition}\label{def:conformaloperator}
        For any $g\in {\rm Met}^{\infty}(\Omega)$, let us define the $N$th order GJMS operator $P_g^N$ for $N=2,4,6$, respectively, by
    \begin{align*}
        &P^2_g:=-\Delta_g+\frac{n-2}{2}R_g&\\
        &P^4_g:=\Delta_{g}^{2}-\dive\left(\frac{(n-2)^2+4}{2(n-1)(n-2)}R_{g}g-\frac{4}{n-2}\Ric_g\right)\ud +\frac{n-4}{2}Q^4_g&\\
        &P_g^6:=-\Delta_g^3-\Delta_g \delta T_2 \ud-\delta T_2 \ud \Delta_g-\frac{n-2}{2} \Delta_g\left(\sigma_1\left(A_g\right) \Delta_g\right)-\delta T_4 \ud+\frac{n-6}{2} Q_g^6.&
    \end{align*}
        When $N=2$, the operator $P_g^2=L_g$ is the so-called conformal Laplacian.
    \end{definition}

    \section{Modica estimates}
        In this appendix, we discuss possible pointwise estimates for positive smooth solutions to \eqref{ourlimitPDE}.
        These estimates have strong geometric implications in terms of the associated conformally flat metric.
        
        In \cite[Theorem~1.4]{arXiv:1708.04660}, it is proved that positive smooth solutions to 
        \begin{equation*}
            \Delta^2 u=\frac{n(n-4)(n^2-4)}{16}u^{\frac{n+4}{n-4}} \quad {\rm in} \quad \mathbb{R}^n\setminus\{0\}.
        \end{equation*}      
        satisfies the following pointwise inequality
        \begin{equation*}
            -\Delta u-\frac{4}{n-2}\frac{|\nabla u|^2}{u}\geqslant \sqrt{\frac{n-4}{n}}u^{\frac{n}{n-4}}  \quad {\rm in} \quad \mathbb{R}^n\setminus\{0\}.
        \end{equation*}
        This implies in particular that the scalar curvature $Q_g^2$ of the conformally flat metric $g=u^{4/(n-4)}\delta$ is positive.
        This type of result is known in the literature as Modica-type estimates.

       In our situation, we start by writing the metric $g\in [g_0]$ as $g =( u^{\frac{n-2}{n-6}})^{\frac{4}{n-2}} \delta$, we see 
    	\begin{equation} \label{scal_curv}
        	Q_g^2= -\frac{4(n-1)}{n-2} u^{\frac{-(n+2)}{n-6}} \Delta \left ( u^{\frac{n-2}{n-6}} \right ) 
        	= -\frac{4(n-1)}{n-6} u^{-\frac{n-2}{n-6}} \left ( \Delta u + \frac{4}{n-6}
        	\frac{|\nabla u|^2}{u} \right ) .
        \end{equation} 
        and
        \begin{equation*}
        	-\Delta \left ( u^{\frac{n-2}{n-6}} \right ) 
        	= -\Delta u -\frac{4}{n-6}
        	\frac{|\nabla u|^2}{u}.
        \end{equation*} 
        From this, we conclude that $Q_g^2 \geqslant 0$ implies $-\Delta u \geqslant  0$, and in fact is a stronger condition. 
        Similarly, writing $g=(u^{\frac{n-4}{n-6}})^{\frac{4}{n-4}} \delta$, it follows
    	\begin{equation} \label{q4-curv} 
        	Q_g^4 = \frac{2}{n-4} u^{-\frac{n+4}{n-6}} \Delta^2 \left ( u^{\frac{n-4}{n-6}} \right ).  
    	\end{equation} 
        Furthermore, a long computation shows 
    	\begin{eqnarray*} 
        	(-\Delta)^2 \left ( u^{\frac{n-4}{n-6}} \right ) & = & \frac{n-4}{n-6} u^{\frac{2}{n-6}} \Delta^2 u + 
        	\frac{8(n-4)}{(n-6)^2} u^{\frac{8-n}{n-6}} \langle \nabla u, \nabla \Delta u \rangle \\ 
        	& & + \frac{4(n-4)}{(n-6)^2} u^{\frac{8-n}{n-6}} |D^2 u|^2 + \frac{8(n-4)(8-n)}{(n-6)^3} 
        	u^{\frac{-2(n-7)}{n-6}} D^2u (\nabla u, \nabla u) \\ 
        	& & + \frac{4(n-4)(8-n)}{(n-6)^3} u^{\frac{-2(n-7)}{n-6}} |\nabla u|^2 \Delta u 
        	+ \frac{2(n-7)(n-8)}{(n-6)^4} u^{\frac{20-3n}{n-6}} |\nabla u|^4,
    	\end{eqnarray*} 
    	where $$|D^2 u|^2 = \sum_{i,j = 1}^n u_{x_ix_j}^2 \quad {\rm and} \quad D^2u (\nabla u, \nabla u) 
    	= \sum_{i,j= 1}^n u_{x_ix_j} u_{x_i} u_{x_j}.$$
        Hence, the conditions $Q_g^2 \geqslant 0$ and $Q_g^4 \geqslant 0$ are not enough to guarantee that $\Delta^2 u \geqslant  0$ directly.
   
        Based on this, it is natural to ask whether the following result holds.      
        \begin{conjecture}
         Let $u\in \mathcal{C}^{\infty}(\mathbb R^n\setminus\{0\})$ be a positive solutions to \eqref{ourlimitPDE}.
        Then, the conformally flat metric given by $g = u^{{4}/{(n-6)}} \delta$ satisfies the following pointwise estimate
        \begin{equation*}
            Q_2(u)\geqslant \sqrt{\frac{n-6}{n}}u^{\frac{n}{n-6}} \quad {\rm and}  \quad  Q_4(u)\geqslant \sqrt{\frac{n-6}{n}}u^{\frac{n}{n-6}} \quad {\rm in} \quad \mathbb{R}^n\setminus\{0\}
        \end{equation*}
        where
        \begin{equation*}
            Q_2(u):=-\Delta u-\frac{4}{n-6}\frac{|\nabla u|^2}{u}.
        \end{equation*}
        and
         \begin{align*}
            Q_4(u)&:=\Delta^2 u-\frac{8}{(n-6)} u^{\frac{8-n}{2}} \langle \nabla u, \nabla \Delta u \rangle- \frac{4}{(n-6)} u^{\frac{8-n}{2}} |D^2 u|^2 -\frac{8(8-n)}{(n-6)^2}u^{{7-n}} D^2u (\nabla u, \nabla u)\\ 
        	&- \frac{4(8-n)}{(n-6)^2} u^{{7-n}} |\nabla u|^2 \Delta u - \frac{2(n-7)(n-8)}{(n-6)^3(n-4)} u^{\frac{20-3n}{2}} |\nabla u|^4.
        \end{align*}
        In particular, it follows that the curvatures $Q^2_g$ and $Q^4_g$ associated with the conformally flat metric $g=u^{4/(n-6)}\delta$ are both positive.
        \end{conjecture}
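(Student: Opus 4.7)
The plan is to split the analysis according to the classification in Theorem~\ref{thmA}: any positive $u \in \mathcal{C}^{\infty}(\mathbb{R}^n\setminus\{0\})$ solving \eqref{ourlimitPDE} is either a spherical bubble $u_{x_0,\varepsilon}$ as in \eqref{sphericalsolutions} (removable singularity at the origin), or a radial Emden--Fowler solution $u_{\varepsilon,T}$ as in \eqref{eq:fourthorderemdenfwoler} (non-removable singularity). The two pointwise estimates therefore reduce to verifying the inequality on each of these two families separately, and each family can be handled in a different, essentially elementary, way.

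First I would treat the spherical family. After exploiting the conformal/dilation invariance to normalize $x_0=0$ and $\varepsilon=1$, the metric $g=u_{\rm sph}^{4/(n-6)}\delta$ is the round sphere pulled back by stereographic projection, and direct differentiation reduces $Q_2(u_{\rm sph})$ and $Q_4(u_{\rm sph})$ to explicit closed-form expressions in $|x|$. The conjectured inequalities then become algebraic identities in $|x|$, and since the resulting conformal metric has constant positive scalar and $Q^4$-curvatures one expects equality (or at least attainment of $\sqrt{(n-6)/n}$ as an infimum) on this family, which is both a sanity check and identifies the sharp constant.

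Next I would treat the Delaunay family. Passing to cylindrical coordinates via Definition~\ref{def:cylindricaltransformation}, $v_\varepsilon := \mathfrak{F}(u_{\varepsilon,T})$ is a periodic solution of the sixth order ODE \eqref{ourODE}, and $Q_2(u_{\varepsilon,T})$ and $Q_4(u_{\varepsilon,T})$ transform into polynomial expressions in $v_\varepsilon^{(k)}$ for $k\leqslant 4$. The conjecture becomes a pointwise lower bound along the orbit, and the main structural input is the conserved Hamiltonian $\mathcal{H}_{\rm cyl}$ from \eqref{hamiltonian1}. Combining $\mathcal{H}_{\rm cyl}$ with the ODE \eqref{ourODE} and the energy ordering of Delaunay solutions \cite[Lemma~4.14]{arXiv:2210.04376}, one should be able to interpolate monotonically in $\varepsilon\in(0,\varepsilon_n^*]$ between the cylindrical extremum $\varepsilon=\varepsilon_n^*$ (where $v_\varepsilon$ is constant and the inequality collapses to algebra) and the spherical limit $\varepsilon\searrow 0$ (covered by the first step after rescaling).

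The main obstacle will be obtaining a \emph{pointwise} lower bound along the Delaunay orbit rather than merely an averaged one, since both $Q_2(u_{\varepsilon,T})$ and $u^{n/(n-6)}$ oscillate as $t$ varies, and the single conservation law $\mathcal{H}_{\rm cyl}$ is unlikely to close the argument by itself. A backup route, which avoids the classification, is to adapt the Bernstein-type $P$-function technique of Fazly--Wei--Xu~\cite{arXiv:1708.04660} from the biharmonic to the triharmonic setting: set $P(u):=Q_2(u)-\sqrt{(n-6)/n}\,u^{n/(n-6)}$, compute $(-\Delta)^3 P$ directly from \eqref{ourlimitPDE}, and attempt to rule out an interior negative minimum via the maximum principle applied to an appropriate weighted combination of $P$, $\nabla P$, and $\Delta P$. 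The delicate point, where I expect the heart of the technical work to lie, is controlling the cross-terms generated when three successive Laplacians act on the nonlinear quotient $|\nabla u|^2/u$, and an entirely analogous but heavier computation would be needed for the $Q_4$ estimate.
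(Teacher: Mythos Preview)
The statement you are attempting to prove is explicitly labeled a \emph{Conjecture} in the paper and is left open; the appendix motivates it by analogy with the fourth order Modica estimate of \cite{arXiv:1708.04660} and by the curvature identities \eqref{scal_curv}--\eqref{q4-curv}, but no proof is given. There is therefore nothing in the paper to compare your proposal against.

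As for the proposal itself, the reduction via Theorem~\ref{thmA} to the spherical and Delaunay families is sound in principle, and the spherical case is indeed a closed-form computation. However, you correctly identify the genuine obstacle: for the Delaunay family the single first integral $\mathcal{H}_{\rm cyl}$ controls only one combination of $v_\varepsilon,\dots,v_\varepsilon^{(5)}$, while the pointwise inequalities for $Q_2$ and $Q_4$ involve $v_\varepsilon,\dots,v_\varepsilon^{(4)}$ separately at each $t$, so the interpolation-in-$\varepsilon$ sketch is not an argument as it stands. Your backup $P$-function route is closer in spirit to how \cite{arXiv:1708.04660} actually proceeds, but note that already in the biharmonic case the proof requires a carefully designed auxiliary function and an iterated maximum-principle argument; for the triharmonic operator the analogous computation has, to our knowledge, not been carried out, and the cross-terms you flag are precisely why the paper records this as a conjecture rather than a theorem.
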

 

\end{document}